\newtheorem{theorem}{Theorem}[section]
\newtheorem{lemma}[theorem]{Lemma}
\newtheorem{corollary}[theorem]{Corollary}
\newtheorem*{conjecture*}{Conjecture}
\theoremstyle{definition}
\newtheorem{definition}[theorem]{Definition}
\theoremstyle{definition}
\newtheorem{example}[]{Example}
\newcommand{\kemeny}{\mathscr{K}}
\title{Kemeny's constant for non-backtracking random walks}
\date{\today}
\date{}
\author{
    Jane Breen\footnote{Ontario Tech University, Oshawa, Ontario, Canada (jane.breen@ontariotechu.ca)},
    Nolan Faught\footnote{
        Brigham Young University, Provo, Utah, USA (faught3@gmail.com,
        mkempton@mathematics.byu.edu, adamarstk@yahoo.com, alice.oveson@gmail.com).
    }, 
    Cory Glover\footnote{Network Science Institute, Northeastern University, Boston, Massachusetts, USA (glover.co@northeastern.edu)}, \\
    Mark Kempton\footnotemark[2],
    Adam Knudson\footnotemark[2],
    Alice Oveson\footnotemark[2]
   
}
\begin{document}

\maketitle

\begin{abstract}
    Kemeny's constant for a connected graph $G$ is the expected time for a random walk to reach a randomly-chosen vertex $u$, regardless of the choice of the initial vertex. We extend the definition of Kemeny's constant to non-backtracking random walks and compare it to Kemeny's constant for simple random walks. We explore the relationship between these two parameters for several families of graphs and provide closed-form expressions for regular and biregular graphs. In nearly all cases, the non-backtracking variant yields the smaller Kemeny's constant.
\end{abstract}

\bigskip

\noindent\textbf{MSC:} 05C50; 05C81

\noindent\textbf{Keywords:} Kemeny's constant; non-backtracking walks; regular graphs

\section{Introduction}
    A random walk on a graph $G = (V, E)$ is a Markov chain on $V$ that can model heat flow, games of
    chance, and solve combinatorial problems, among other applications. There has been growing interest
    in the behavior of non-backtracking random walks, which are Markov chains on $E$ that have many
    properties similar to simple random walks. The purpose of this work is to define Kemeny's constant
    for non-backtracking random walks, and to determine some of its properties. Kemeny's constant (which we
    introduce shortly) can be considered as the \emph{expected time to mixing} for the random walk on
    a graph, comparable to the \emph{mixing time} of a random walk on a graph.
    It is well-known that, in many cases, the non-backtracking random walk on a graph has a
    faster mixing time than the simple random walk on a graph (see \cite{alon2007non,kempton2016non}).
    We explore similar angles here.  We prove in this paper that for regular and biregular graphs, the
    non-backtracking Kemeny's constant is smaller than the value of Kemeny's constant for the corresponding simple random walk (with only a few exceptions of small order).  This means that the non-backtracking random walk has a shorter expected time to mixing.  We likewise explore other families of graphs and find a significantly smaller Kemeny's constant using the non-backtracking random walk.
    
    A smaller Kemeny's constant indicates a short expected time to mixing, meaning that on average, hitting times are shorter.  Thus our results seem to indicate that, for many graphs, non-backtracking walks will have shorter average hitting times than their simple random walk counterparts. In applications using random-walk-based strategies, graphs with smaller Kemeny's constant tend to have more efficient performance (see for instance \cite{patel2015robotic}).  Thus our results suggest that in applications where random walks are used, and a small Kemeny's constant is desirable, use of a non-backtracking random walk may be more efficient.  Investigation into replacing simple random walks with non-backtracking walks in various applications is an  area of research receiving increased attention (see \cite{krzakala2013spectral,arrigo2019non}).  Our results suggest that this is a potentially important avenue for future research in applications where Kemeny's constant plays an important role.
    
    \section{Preliminaries}
    
    Throughout the paper, $G = (V, E)$ is a connected, undirected graph with $n = |V|$ vertices
    and $m = |E|$ edges. Edges are considered to be unordered pairs of distinct vertices $\{u, v\}$,
    and vertices $u$ and $v$ are said to be \emph{adjacent} if there is an edge $\{u, v\} \in E$. We
    also denote adjacency by $u\sim v$. If $u\sim v$, then $v$ is a \emph{neighbor} of $u$, and the
    number of neighbors of $u$ is called the \emph{degree} of the vertex $u$, denoted $\deg(u)$. The
    adjacency matrix of a graph of order $n$ is the $n\times n$ matrix $A=[a_{ij}]$ such that 
    \begin{equation*}
        a_{ij} = \begin{cases}
            1 &\text{if } i \sim j, \\
            0 &\text{otherwise.}
        \end{cases}
    \end{equation*}

    A discrete-time, time-homogeneous Markov chain on a finite state space $\{s_1, \ldots, s_n\}$
    models a system which occupies one of the states $s_i$ at any fixed time and transitions from one
    state to another in discrete time-steps. For any pair of states $s_i$, $s_j$, there is a
    transition probability $p_{ij}$ denoting the probability of transitioning to state
    $s_j$ in a single time-step, given that the system is currently in state $s_i$. Note that this
    probability does not depend on any states visited previously; this is called the Markov property.
    The transition probability matrix $P=[p_{ij}]$ encodes all information regarding the behavior of
    the Markov chain; the $(i,j)^{th}$ entry of $P^k$ denotes the probability of reaching state $s_j$
    in exactly $k$ steps, given that the system starts in state $s_i$. The $i^{th}$ row of the matrix
    $P^k$ thus gives the probability distribution across all states after $k$ time-steps, given that
    the system starts in state $s_i$. Under certain conditions on the matrix $P$, these probability
    distributions converge to a \emph{stationary distribution} independent of $i$; this stationary
    distribution vector $\pi$ is determined by the left eigenvector of $P$ corresponding to the
    spectral radius $1$ (which is in fact an eigenvalue due to Perron-Frobenius), normalized so that the entries sum to 1. The entry $\pi_i$ of the stationary distribution may be interpreted as the long-term probability that the Markov chain occupies the state $s_i$.

    The (simple) \emph{random walk} on a graph $G = (V, E)$ is a Markov chain whose states are the vertices of
    the graph, labelled in some order $v_1, \ldots, v_n$. If the random walk occupies $v_i$, the next state is chosen uniformly at random from
    the neighbors of $v_i$; that is, the random walker transitions to an adjacent vertex $v_j$ with
    probability $1/\deg(v_i)$. Thus, the transition matrix of this Markov chain is 
    \begin{equation*}
        P = D^{-1} A,
    \end{equation*}
    where $D$ is the diagonal matrix whose $i^{th}$ entry is the $\deg(v_i)$ and $A$ is the
    adjacency matrix of the graph. Note that the stationary distribution vector $\pi$ has $i^{th}$
    entry $\pi_i = \deg(v_i)/2m$; that is, the long-term probability of the random walker being
    on a vertex is proportional to the degree of that vertex.

    Given a Markov chain, we can also quantify its short-term behavior. The \emph{hitting time} or
    \emph{mean first passage time} from state $s_i$ to state $s_j$ of a Markov chain, denoted $m_{ij}$, is
    the expected  time it takes to reach state $s_j$, given that the system starts in state $s_i$. 
    A very interesting measure of the `average' short-term behavior of a Markov chain is known as
    \emph{Kemeny's constant}. Given an initial state $i$, define the quantity
    \begin{equation*}
        \kappa_i = \sum_{\substack{j=1\\j\neq i}}^n m_{ij} \pi_j.
    \end{equation*}
    where $m_{ij}$ is the mean first passage time from $s_i$ to $s_j$ and $\pi$ is the stationary
    distribution of the Markov chain. This quantity may be interpreted as the expected time to reach a
    randomly-chosen state $s_j$, given that we start in a fixed state $s_i$. Remarkably, this sum is
    independent of the initial state, and this quantity is known instead as \emph{Kemeny's constant},
    and denoted $\kemeny(P)$. Note that the above expression can be rewritten as follows:
    \begin{equation*}
        \kemeny(P) = \sum_{i=1}^n \sum_{\substack{j=1\\j\neq i}}^n \pi_im_{ij} \pi_j,
    \end{equation*}
    admitting the interpretation of $\kemeny(P)$ as the expected length of a trip between
    randomly-chosen states in the Markov chain (where `randomly-chosen' means with respect to the
    stationary distribution). In the case that $P$ represents the transition matrix for a simple random walk on a graph $G$, we can think of $\kemeny(P)$ as an inherent measure of the `connectedness' of the graph $G$, and denote this graph invariant by $\kemeny(G)$ instead.

    It is shown in \cite{levene2002kemeny} that Kemeny's constant can be expressed in terms of the
    eigenvalues of the transition matrix $P$.
    \begin{lemma}[\cite{levene2002kemeny}]\label{lem:kemeny_def}
        Given a Markov chain with transition matrix $P$ with eigenvalues $1 =\rho_1 > \rho_2
        \geq \rho_2 \geq \cdots \geq \rho_n$, then 
        \begin{equation*}
            \kemeny(P) = \sum_{i=2}^n \frac{1}{1 - \rho_i}.
        \end{equation*}
    \end{lemma}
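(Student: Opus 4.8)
The plan is to relate Kemeny's constant to the trace of the \emph{fundamental matrix} of the chain, and then to read off the eigenvalues of that matrix directly from those of $P$. Write $\Pi = \ones\pi^T$ for the $n\times n$ matrix each of whose rows is the stationary vector $\pi^T$, and define the fundamental matrix $Z = (I - P + \Pi)^{-1}$. First I would verify that this inverse exists (a standard fact for ergodic chains) and record two elementary observations: since $P\ones = \ones$ and $\Pi\ones = \ones$, we have $(I - P + \Pi)\ones = \ones$, and hence $Z\ones = \ones$, so every row of $Z$ sums to $1$.

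Next I would invoke the classical Kemeny--Snell expression for the mean first passage times in terms of $Z$, namely $m_{ij} = (z_{jj} - z_{ij})/\pi_j$ for $i \neq j$. Substituting this into the definition of $\kappa_i$ gives
\begin{equation*}
    \kappa_i = \sum_{j\neq i} m_{ij}\pi_j = \sum_{j \neq i}(z_{jj} - z_{ij}) = \bigl(\operatorname{tr}(Z) - z_{ii}\bigr) - \bigl(1 - z_{ii}\bigr) = \operatorname{tr}(Z) - 1,
\end{equation*}
where the penultimate equality uses the unit row-sum property $\sum_j z_{ij} = 1$ established above. Note that this display already exhibits the independence of $\kappa_i$ from the starting state $i$, since the right-hand side does not involve $i$; this recovers the fact that the sum defining $\kappa_i$ is genuinely a constant $\kemeny(P)$.

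It then remains only to compute $\operatorname{tr}(Z)$ from the spectrum of $P$. Working with the spectral resolution $P = \sum_{k} \rho_k \vec v_k \vec u_k^T$ (normalized so that $\vec u_k^T \vec v_\ell = \delta_{k\ell}$, with $\rho_1 = 1$, $\vec v_1 = \ones$, $\vec u_1 = \pi$, so that $\Pi = \vec v_1 \vec u_1^T$), I would show that $I - P + \Pi$ fixes the Perron direction with eigenvalue $1$, while sending each $\rho_k$ with $k \geq 2$ to the eigenvalue $1 - \rho_k$. Inverting, $Z$ has eigenvalues $1, (1-\rho_2)^{-1}, \ldots, (1-\rho_n)^{-1}$, whence $\operatorname{tr}(Z) = 1 + \sum_{i=2}^n (1-\rho_i)^{-1}$. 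Combining this with the previous paragraph gives $\kemeny(P) = \operatorname{tr}(Z) - 1 = \sum_{i=2}^n (1-\rho_i)^{-1}$, as claimed.

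The main obstacle is the bookkeeping around the eigenvalue $1$: one must check that the Perron direction contributes exactly $1$ (and not $0$) to $\operatorname{tr}(Z)$, and that this extra $1$ is precisely cancelled by the $-1$ arising from excluding the diagonal term $j=i$ in the definition of $\kappa_i$. A secondary technical point is the diagonalizability implicit in the spectral resolution; for non-diagonalizable $P$ I would instead argue that $\Pi$ is the spectral projection onto the Perron eigenspace and that $I - P + \Pi$ acts invertibly on its complement with the stated eigenvalues, which secures the trace identity without assuming a full eigenbasis.
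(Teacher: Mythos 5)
Your proposal is correct, but note that the paper itself offers no proof of this lemma at all: it is quoted directly from the cited reference of Levene and Loizou, so there is nothing internal to compare against. Your argument is the standard fundamental-matrix derivation, and it is essentially the one found in that literature: the identities $Z\ones = \ones$ and $\kappa_i = \operatorname{tr}(Z) - 1$ are computed correctly (the cancellation of $\pi_j$ in $m_{ij}\pi_j = z_{jj} - z_{ij}$ and the use of the unit row sums are exactly right, and this cleanly exhibits the independence from $i$), and the spectral bookkeeping — that $I - P + \Pi$ has eigenvalue $1$ on the Perron direction and $1-\rho_k$ on the complementary invariant subspace, so that $\operatorname{tr}(Z) = 1 + \sum_{i=2}^n (1-\rho_i)^{-1}$ — is handled properly, including the remark that for non-diagonalizable $P$ one should work with the spectral projection rather than an eigenbasis (trace of the inverse is still the sum of reciprocal eigenvalues with algebraic multiplicity, so the identity survives). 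The one external ingredient you lean on is the Kemeny--Snell mean-first-passage formula $m_{ij} = (z_{jj}-z_{ij})/\pi_j$, invoked without proof; this is a classical fact and citing it is reasonable, but a fully self-contained write-up would either prove it (a short computation from the defining recurrence $m_{ij} = 1 + \sum_{k\neq j} p_{ik}m_{kj}$) or cite it explicitly. What your route buys over the paper's bare citation is a transparent explanation of where the formula comes from and, in particular, why the excluded diagonal term $j=i$ exactly cancels the Perron contribution to $\operatorname{tr}(Z)$ — the bookkeeping point that distinguishes the convention $\kemeny(P) = \sum_{i\geq 2}(1-\rho_i)^{-1}$ used here from the variant that includes $j=i$ and equals $\operatorname{tr}(Z)$.
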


    Hunter gives an interpretation of Kemeny's constant in \cite{hunter2006mixing} as the \emph{expected
    time to mixing} of a Markov chain. This is distinct from (but comparable to) the usual idea of \emph{mixing time} which describes the expected time taken for the Markov chain to become `close'
    to its stationary distribution. It is well-known that the spectral gap $1-\rho_2$, or the distance
    between the spectral radius of $P$ and its second-largest eigenvalue, bounds the rate of convergence
    of the Markov chain to the stationary distribution. If $1-\rho_2$ is small (i.e. $\rho_2$ is close
    to 1) the chain converges slowly. We note that from the eigenvalue expression for Kemeny's constant,
    it is clear that if there are eigenvalues close to $1$, that this will result in a large
    value of Kemeny's constant and thus indicate a chain for which the expected length of a random trip
    between states is relatively large, indicating poor mixing properties of the chain.

    Given a graph $G$, there is also an expression for $\kemeny(G)$ in terms of effective resistance that will at times be useful. We denote by $r(i,j)$ the \emph{effective resistance} between vertex $i$ and $j$,
    considering the graph as an electric circuit with each edge representing a unit resistor. This
    quantity is given by $r(i,j) = (e_i - e_j)^TL^\dag(e_i-e_j)$ where $e_i$ is the vector with a 1
    in the $i$-th position and zeros elsewhere and $L^\dag$ is the Moore-Penrose pseudoinverse of the
    graph Laplacian matrix (see \cite{bapat2010graphs}).
    
    \begin{lemma}[Corollary 1 of \cite{palacios2011broder}]\label{thm:kemeny}
        Suppose that $G = (V, E)$ is a simple connected graph, and let $R$ denote the matrix whose
        $(i, j)^{th}$ entry is the effective resistance between $i$ and $j$, $d$ the vector whose $i^{th}$
        entry is the degree of vertex $i$, and $m = |E|$. Kemeny's constant of the graph is related to
        the effective resistance by the identity
        \begin{equation*}
            \kemeny(G) = \frac{d^T Rd}{4m} = \frac{1}{4m}\sum_{i, j \in V} d_i d_j r(i, j).
        \end{equation*}
    \end{lemma}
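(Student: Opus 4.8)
The plan is to derive the stated identity by combining two known formulas: the eigenvalue expression for Kemeny's constant from Lemma~\ref{lem:kemeny_def}, and Broder and Karlin's classical formula relating mean first passage times to effective resistances. The key bridge is the identity $m_{ij} + m_{ji} = 2m\, r(i,j)$, which holds for any simple random walk on a connected graph and expresses the commute time between $i$ and $j$ in terms of effective resistance. With this in hand, the symmetric form of Kemeny's constant given in the excerpt becomes the natural starting point.

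\medskip

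\noindent First I would write Kemeny's constant in its symmetric double-sum form,
\begin{equation*}
    \kemeny(G) = \sum_{i=1}^n \sum_{\substack{j=1\\j\neq i}}^n \pi_i m_{ij} \pi_j,
\end{equation*}
and substitute the stationary distribution for the simple random walk, $\pi_i = d_i/(2m)$. This yields
\begin{equation*}
    \kemeny(G) = \frac{1}{4m^2} \sum_{i \neq j} d_i d_j\, m_{ij}.
\end{equation*}
The next step is to symmetrize: since $r(i,j) = r(j,i)$ and $d_i d_j = d_j d_i$, I would pair the $(i,j)$ and $(j,i)$ terms to replace $m_{ij}$ with $\tfrac12(m_{ij} + m_{ji})$ inside the sum, giving
\begin{equation*}
    \kemeny(G) = \frac{1}{4m^2} \sum_{i \neq j} d_i d_j\, \tfrac{1}{2}\bigl(m_{ij} + m_{ji}\bigr).
\end{equation*}

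\medskip

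\noindent Now I would invoke the commute-time identity $m_{ij} + m_{ji} = 2m\, r(i,j)$ to obtain
\begin{equation*}
    \kemeny(G) = \frac{1}{4m^2} \sum_{i \neq j} d_i d_j\, m\, r(i,j) = \frac{1}{4m} \sum_{i \neq j} d_i d_j\, r(i,j).
\end{equation*}
Since $r(i,i) = 0$, the diagonal terms vanish and the restricted sum equals the full sum $\sum_{i,j \in V} d_i d_j\, r(i,j) = d^T R d$, which gives exactly the claimed identity $\kemeny(G) = d^T R d / (4m)$.

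\medskip

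\noindent The main obstacle is justifying the commute-time formula $m_{ij} + m_{ji} = 2m\, r(i,j)$, which is really the substantive content here; the rest is bookkeeping. Since the statement is attributed to Palacios' corollary of Broder--Karlin, I expect that the intended proof simply cites this identity rather than reproving it, so the work reduces to the algebraic manipulation above. If a self-contained argument is preferred, one would need to connect $r(i,j)$ to the pseudoinverse $L^\dagger$ via $r(i,j) = (e_i - e_j)^T L^\dagger (e_i - e_j)$ and relate $L^\dagger$ to the fundamental matrix governing mean first passage times, but this is precisely the content of the cited result and need not be redone.
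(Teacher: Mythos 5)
Your derivation is correct. Note, though, that the paper contains no proof of this lemma to compare against: the statement is imported verbatim as Corollary 1 of \cite{palacios2011broder}, with the citation standing in for the argument. Your route --- writing $\kemeny(G)=\sum_{i\neq j}\pi_i m_{ij}\pi_j$, substituting $\pi_i = d_i/(2m)$, symmetrizing to replace $m_{ij}$ by $\tfrac{1}{2}\bigl(m_{ij}+m_{ji}\bigr)$, and invoking the commute-time identity $m_{ij}+m_{ji}=2m\,r(i,j)$ --- is the standard derivation and is essentially the content of the cited corollary, so you have in effect supplied the missing proof rather than deviated from it. You are also right about where the real weight lies: everything reduces to the commute-time formula (the classical theorem of Chandra, Raghavan, Ruzzo, Smolensky, and Tiwari), and citing that identity rather than reproving it is exactly parallel to what the paper does in citing the lemma as a whole; a fully self-contained argument would additionally need the $L^\dagger$-based proof of the commute-time formula you sketch at the end.
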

    
    For certain graph families considered in this paper the following definitions will be helpful to
    deduce the value of Kemeny's constant for graphs with sparse structure. Note
    that \emph{moment} was first proposed for trees in \cite{ciardo2020kemeny}.
    
    \begin{definition}\label{def:moment}
        Let $G = (V, E)$ be a simple connected graph,  $R=[r(i,j)]$ the matrix of effective resistances in $G$ and $d$ the vector of vertex degrees. Let $e_v$ denote the vector with a 1 in the
        $v^{th}$ position and zeros elsewhere. The \emph{moment} of $v \in V$ is
        \begin{equation*}
            \mu(G, v) = d^T R e_v = \sum_{i\in V(G)} d_i r(i,v).
        \end{equation*}
    \end{definition}
    
    \begin{definition}
        Let $G_1, G_2$ be simple connected graphs, each with a vertex labelled $v$. The \emph{1-sum} $G = G_1\oplus_{v} G_2$ is the graph created by 
        taking copies of $G_1, G_2$, and identifying the copies of $v$. We often omit the subscript when the choice and/or labelling of
        vertices is clear. We say $G_1\oplus_vG_2$ has a \emph{1-separation}, and that $v$ is a
        \emph{1-separator} or \emph{cut vertex.}
    \end{definition}

    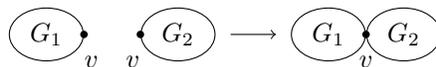
\begin{figure}[H]
        \centering
        \begin{tikzpicture}
            \tikzstyle{every node}=[circle, draw=none, fill=white, minimum width = 6pt, inner sep=1pt]
            \draw[] (1.75, 1)ellipse(14pt and 10pt);
            \draw[] (3.5, 1)ellipse(14pt and 10pt);
            \draw{
            (1.75,1)node[]{$G_1$}
            (3.5,1)node[]{$G_2$}
            (2.25,1)node[fill=black, minimum width = 2pt, label={[shift={(0.1,-.6)}]{$v$}}]{}
            (3,1)node[fill=black, minimum width = 2pt, label={[shift={(-0.1,-.6)}]{$v$}}]{}
            };
    
            \draw[->] (4.2,1) -- (4.8,1);
    
            \draw[] (5.5, 1)ellipse(14pt and 10pt);
            \draw[] (6.5, 1)ellipse(14pt and 10pt);
            \draw{
            (5.5,1)node[]{$G_1$}
            (6.5,1)node[]{$G_2$}
            (6,1)node[fill=black, minimum width = 2pt, label={[shift={(0,-.6)}]{$v$}}]{}
            };
        \end{tikzpicture}
    \caption{The graph $G=G_1\oplus_vG_2$ created from $G_1$ and $G_2$}
    \label{fig:vertexsum}
    \end{figure}

    \begin{lemma}[Theorem 2.1 of \cite{faught2021}]\label{thm:kem1sepformula}
        Let $G$ be a graph with a 1-separator $v$. Let $G_1, G_2$ be the two graphs of the
        1-separation so that $G=G_1\oplus_vG_2$ and let $m_1 = |E(G_1)|$ and $m_2 = |E(G_2)|$. Then
        we have
        \begin{align*}
            \kemeny(G)
            &= \frac{m_1\left(\kemeny(G_1) + \mu(G_2, v)\right) + m_2\left(\kemeny(G_2)
             + \mu(G_1, v)\right)}{m_1+m_2}.
        \end{align*}
    \end{lemma}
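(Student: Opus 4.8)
The plan is to work entirely from the effective-resistance formula of Lemma~\ref{thm:kemeny}, which gives $\kemeny(G) = \frac{1}{4m}\sum_{i,j\in V} d_i d_j\, r(i,j)$, and to exploit the two structural simplifications that a cut vertex forces on both degrees and effective resistances. Writing $V_1, V_2$ for the vertex sets of the copies of $G_1, G_2$ (so that $V_1 \cap V_2 = \{v\}$ and $V = V_1 \cup V_2$), I would first record that every vertex other than $v$ keeps its degree, while $v$ acquires the combined degree $d_v = d_v^{(1)} + d_v^{(2)}$, where $d_v^{(i)}$ denotes the degree of $v$ in $G_i$, and that $m = m_1 + m_2$.

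The crucial ingredient is the series law for effective resistance at a cut vertex: if $i,j$ lie on the same side, then $r_G(i,j)$ is computed inside that side alone, whereas if $i \in V_1$ and $j \in V_2$ then all current must pass through $v$, so $r_G(i,j) = r_{G_1}(i,v) + r_{G_2}(v,j)$. I would justify this either from the series composition of electrical networks or directly from the block structure of the Laplacian pseudoinverse $L^\dag$ induced by the separation. With this in hand, I would split the double sum $\sum_{i,j\in V} d_i d_j\, r_G(i,j)$ according to the partition $V = (V_1\setminus\{v\}) \cup (V_2\setminus\{v\}) \cup \{v\}$, taking care that $v$ is counted exactly once.

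Sorting the resulting blocks by type, the same-side blocks reassemble, after restoring the missing $v$-row and $v$-column, into $4m_1\kemeny(G_1)$ and $4m_2\kemeny(G_2)$ up to correction terms of the form $2d_v^{(i)}\mu(G_i,v)$; the opposite-side blocks factor, via the series law, into products of a moment $\mu(G_i,v)$ with a degree sum $\sum_{u\ne v} d_u^{(j)} = 2m_j - d_v^{(j)}$; and the blocks pairing an ordinary vertex with $v$ contribute $(d_v^{(1)}+d_v^{(2)})\mu(G_i,v)$. Throughout I would use the definition $\mu(G_i,v) = \sum_{u} d_u^{(i)} r_{G_i}(u,v)$ from Definition~\ref{def:moment}, noting that its $u=v$ term vanishes since $r_{G_i}(v,v)=0$.

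The final step is to collect the coefficients of $\mu(G_1,v)$ and $\mu(G_2,v)$ and verify that the correction terms carrying the split degrees $d_v^{(1)}$ and $d_v^{(2)}$ cancel exactly, leaving $\sum_{i,j}d_i d_j\, r_G(i,j) = 4m_1\kemeny(G_1) + 4m_2\kemeny(G_2) + 4m_1\mu(G_2,v) + 4m_2\mu(G_1,v)$; dividing by $4m = 4(m_1+m_2)$ then yields the stated identity. The main obstacle is purely organizational rather than conceptual: because $v$ is shared and its degree in $G$ is the sum of its degrees in the two pieces, one must partition the vertex set carefully to avoid double-counting and then confirm that every stray $d_v^{(1)}$- and $d_v^{(2)}$-term introduced when rebuilding $\kemeny(G_1)$ and $\kemeny(G_2)$ is matched by an equal and opposite term arising from the cross blocks and the $v$-incident blocks.
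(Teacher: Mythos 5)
Your proposal is correct: the series law $r_G(i,j)=r_{G_1}(i,v)+r_{G_2}(v,j)$ for cross pairs, the preservation of same-side resistances, and the degree split $d_v = d_v^{(1)}+d_v^{(2)}$ at the cut vertex do combine, after the cancellation you describe, to give $\sum_{i,j}d_id_j\,r_G(i,j) = 4m_1\kemeny(G_1)+4m_2\kemeny(G_2)+4m_1\mu(G_2,v)+4m_2\mu(G_1,v)$, which yields the lemma upon dividing by $4(m_1+m_2)$. Note that this paper does not prove the lemma at all --- it is imported without proof as Theorem 2.1 of the cited reference \cite{faught2021} --- but your argument via Lemma~\ref{thm:kemeny} and the moment of Definition~\ref{def:moment} is precisely the style of derivation that reference uses, so you have in effect reconstructed the source's proof rather than found a new route.
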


    Proposition 1.2 of \cite{breen2019computing} gives an expression for Kemeny's constant in terms of
    the coefficients of characteristic polynomial of the normalized Laplacian matrix. This result can
    be restated in terms of the transition probability matrix, which will be useful. This is also stated
    in more general form (i.e. for any regular Markov chain) in \cite{yilmaz2020kemeny}.
    
    \begin{lemma}\label{lem:KemenyFromCharPoly}
        Let $G$ be a connected graph, and let  $p(x)$ be the characteristic polynomial of the transition probability matrix for the
        random walk on $G$. Then if $p(1-x) = \cdots c_2x^2 + c_1x$ we have
        \begin{equation*}
            \kemeny(G) = -\frac{c_2}{c_1}.
        \end{equation*}
    \end{lemma}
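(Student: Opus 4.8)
The plan is to derive the claimed identity directly from the eigenvalue formula of Lemma~\ref{lem:kemeny_def}, by computing the two lowest-order coefficients of $p(1-x)$ in terms of the eigenvalues $1 = \rho_1 > \rho_2 \geq \cdots \geq \rho_n$ of $P$. Writing the characteristic polynomial in factored form $p(x) = \prod_{i=1}^n (x - \rho_i)$ and substituting $x \mapsto 1-x$ gives
\begin{equation*}
    p(1-x) = \prod_{i=1}^n \bigl( (1 - \rho_i) - x \bigr).
\end{equation*}
Since $G$ is connected, the Perron--Frobenius theorem ensures that $\rho_1 = 1$ is a simple eigenvalue, so the factor for $i = 1$ is exactly $-x$ while every remaining factor has nonzero constant term $1 - \rho_i$. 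First I would isolate this factor and write $p(1-x) = -x\,g(x)$, where
\begin{equation*}
    g(x) = \prod_{i=2}^n \bigl( (1 - \rho_i) - x \bigr), \qquad g(0) = \prod_{i=2}^n (1 - \rho_i) \neq 0.
\end{equation*}

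Next I would read off the coefficients $c_1$ and $c_2$ from this factorization. Expanding $g(x) = g(0) + g'(0)\,x + O(x^2)$ about the origin yields $p(1-x) = -g(0)\,x - g'(0)\,x^2 + O(x^3)$, so that $c_1 = -g(0)$ and $c_2 = -g'(0)$; note that the absence of a constant term is forced by $p(1) = 0$, consistent with the stated form of $p(1-x)$. To evaluate $g'(0)$ I would use the product rule (equivalently, the logarithmic derivative of $g$), obtaining
\begin{equation*}
    \frac{g'(0)}{g(0)} = -\sum_{i=2}^n \frac{1}{1 - \rho_i}.
\end{equation*}

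Finally, combining these gives $-c_2/c_1 = -g'(0)/g(0) = \sum_{i=2}^n 1/(1-\rho_i)$, which is exactly the expression for $\kemeny(P)$ from Lemma~\ref{lem:kemeny_def}, completing the proof. The computation itself is routine; the only point requiring genuine care is the structural observation that $\rho_1 = 1$ is simple for a connected graph. This is what guarantees that precisely one factor contributes the $-x$ needed to kill the constant term, and that $g(0) \neq 0$ so the ratio $c_2/c_1$ is well defined. I would therefore treat the simplicity of the Perron eigenvalue as the load-bearing step, with the coefficient extraction and the sign bookkeeping being the main place to avoid slips.
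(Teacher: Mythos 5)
Your proof is correct, but note that the paper itself offers no proof of this lemma: it is presented as a restatement of Proposition 1.2 of \cite{breen2019computing} (stated in more general form, for any regular Markov chain, in \cite{yilmaz2020kemeny}). So your argument is necessarily a different---and more self-contained---route: you derive the identity directly from the paper's own eigenvalue formula, Lemma \ref{lem:kemeny_def}. Writing $p(1-x) = \prod_{i=1}^n\bigl((1-\rho_i)-x\bigr)$, isolating the factor $-x$ contributed by the Perron eigenvalue $\rho_1 = 1$ (simple because $G$ is connected, hence the walk irreducible), and setting $g(x) = \prod_{i=2}^n\bigl((1-\rho_i)-x\bigr)$, you correctly obtain $c_1 = -g(0)$ and $c_2 = -g'(0)$, and the logarithmic-derivative computation gives
\begin{equation*}
    -\frac{c_2}{c_1} = -\frac{g'(0)}{g(0)} = \sum_{i=2}^n \frac{1}{1-\rho_i} = \kemeny(G).
\end{equation*}
Your sign bookkeeping checks out, and the ratio $-c_2/c_1$ is insensitive to whether the characteristic polynomial is taken as $\det(xI-P)$ or $\det(P-xI)$, so the convention is immaterial. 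You also correctly identify the load-bearing step: simplicity of $\rho_1 = 1$ is what forces exactly one factor $-x$ (so $p(1-x)$ has no constant term and $c_1 \neq 0$) and guarantees $g(0) \neq 0$, making the ratio well defined. What your approach buys is a short, self-contained verification from results already stated in the paper; what the paper's citation buys is generality, since the referenced versions apply to arbitrary regular Markov chains rather than only to random walks on graphs.
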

        
    \subsection{Non-Backtracking Random Walks}
        Recently, there has been interest in \emph{non-backtracking} random walks on graphs
        \cite{alon2007non,kempton2016non,krzakala2013spectral,kempton2018polya,glover2021non,torres2021nonbacktracking};
        that is, a random walk on a graph where at each step you are not permitted to transition to the
        vertex you were at one step previously. Since the transition probabilities now depend not only
        on the current state of the system but also the previous state, a non-backtracking random walk
        on the vertex set of a graph will not be a Markov chain and as such, Kemeny's constant is not
        defined for such a walk. However, an equivalent walk can be defined on the directed edges of the
        graph, which produces a Markov chain in which we can account for the previous two states of the
        chain, but still has the Markov property, (see \cite{kempton2016non,glover2021non} for instance).

        Let $G$ be a graph with vertex set $V= \{1, 2, \ldots, n\}$ and edge set $E$. The oriented edge
        set of $G$ is $E'(G) = \{(i, j): \{i, j\} \in E\}$; each edge $\{u,v\}$ has been replaced by two
        directed arcs $(u, v)$ and $(v, u)$. An arc $(i,j)$ can also be written $i \to j$, and $i$ is
        referred to as the \emph{tail} of the arc, and $j$ is referred to as the \emph{head}. We define
        a \emph{random walk on the edge space of $G$} as a Markov chain whose states are the elements of
        $E'(G)$, with a positive transition probability $p_{(i, j), (k, l)}$ only if $j=k$. For the
        simple random walk on the edge space of $G$, if the current state is the arc $(i,j)$ the next
        edge is chosen at random from the edges incident with the head of that arc. In particular, the
        transition probabilities are:
        \begin{equation*}
            p_{(i,j), (k, l)} = \begin{cases}
                1 / \deg(j), &\text{if } k = j; \\
                0,           &\text{if } k \neq j.
            \end{cases}
        \end{equation*}

        The transition matrix for the random walk on the edge space of a graph can also be defined using
        matrices, described in the following definition.  See \cite{glover2021non} for
            a more in-depth study of these matrices.

        \begin{definition}\label{def:nbmatrices}
            Let $G$ be a graph with vertex set $V$ and edge set $E$, and let $E'$ denote the oriented edge set of $G$.
            The \emph{startpoint incidence} operator of $G$ is the $n \times 2m$ matrix $T$ with
            rows indexed by $V$ and columns indexed by $E'$.
            \begin{align*}
                T(u, (v, w)) &= \begin{cases}
                    1, &\text{if } u = v; \\
                    0, &\text{otherwise.}
                \end{cases}
                \end{align*}
            The \emph{endpoint incidence} operator of $G$  is the $2m \times n$ matrix $S$ with
            rows indexed by $E'$ and columns indexed by $V$.
                \begin{align*}
                S((u, v), w) &= \begin{cases}
                    1, &\text{if } v = w; \\
                    0, &\text{otherwise.}
                \end{cases}
                \end{align*}
            The \emph{edge reversal operator} $\tau$ is the $2m\times 2m$ matrix with rows and columns both indexed by $E'$ that switches a
            directed edge with its opposite.
                \begin{align*}
                \tau((u, v), (x, y)) &= \begin{cases}
                    1, &\text{if } v = x \text{ and } u = y; \\
                    0, &\text{otherwise}.
                \end{cases}
            \end{align*}
            The adjacency matrix of $G$ is $A = TS$, the \emph{edge adjacency matrix} is given by
            $C = ST$, and the \emph{non-backtracking edge adjacency matrix} is $B = ST - \tau$.
            Let $D_e$ be the diagonal degree matrix  where
            the diagonal entry corresponding to a directed edge $u \to v$ is $\deg(v)$. Then the \emph{edge
            space transition probability matrix} is $P_e = D_e^{-1}C$ and the \emph{non-backtracking
            transition probability matrix} is $P_{nb} = (D_e - I)^{-1}B.$ 
        \end{definition}

        We are now in a position to consider and define the value of Kemeny's constant for a
        non-backtracking random walk on a graph, and compare it with the value of Kemeny's constant for
        a simple random walk on the same graph. A key concern when comparing these random walks is that the state space is different for
        the two Markov chains under consideration. For this reason, we consider both
        as random walks on the edge space. The \emph{edge Kemeny's constant} of an undirected graph $G$,
        denoted $\kemeny_e(G)$, is the value of Kemeny's constant for the random walk on the directed
        edges of the graph, and the \emph{non-backtracking Kemeny's constant}, denoted $\kemeny_{nb}(G)$,
        is the value of Kemeny's constant for the Markov chain with transition matrix $P_{nb}$. To avoid
        ambiguity from this point onwards, we also denote by $\kemeny_v(G)$ the value of Kemeny's
        constant for the simple random walk on the vertices of $G$, and refer to it as the vertex Kemeny's constant.

        Our first main result relates the value of $\kemeny_e(G)$ with the value of $\kemeny_v(G)$ for
        any graph $G$. We first prove a technical lemma.

        \begin{lemma}\label{lem:DSisSD}
            Let $G$ be a graph with no isolated vertices, and let $S$ be the endpoint incidence operator, $D_e$ the degree matrix for the directed edges of the
            graph, and $D$ the degree matrix for the vertices. Then
            \begin{equation*}
                D_e^{-1}S = SD^{-1}.
            \end{equation*}
        \end{lemma}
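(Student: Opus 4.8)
The plan is to prove this matrix identity by a direct entry-by-entry comparison, exploiting the fact that both $D_e^{-1}$ and $D^{-1}$ are diagonal while $S$ is a $0$-$1$ matrix with exactly one nonzero entry in each row. First I would record that the hypothesis of no isolated vertices guarantees $\deg(v) > 0$ for every $v \in V$, so that $D$ and $D_e$ are genuinely invertible and the inverses appearing in the statement are well defined.

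Next I would fix an arbitrary directed edge $(u,v) \in E'$ to index a row and an arbitrary vertex $w \in V$ to index a column, and compute the $((u,v), w)$ entry of each side. Since $D_e^{-1}$ is diagonal with $((u,v),(u,v))$ entry $1/\deg(v)$, left multiplication merely rescales row $(u,v)$ of $S$, so $(D_e^{-1} S)_{(u,v), w} = S_{(u,v), w}/\deg(v)$. Since $D^{-1}$ is diagonal with $(w,w)$ entry $1/\deg(w)$, right multiplication rescales column $w$ of $S$, giving $(S D^{-1})_{(u,v), w} = S_{(u,v), w}/\deg(w)$.

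The concluding step is to observe that these two scalar multiples of $S_{(u,v),w}$ always agree. By the definition of the endpoint incidence operator, $S_{(u,v),w}$ equals $1$ precisely when $w = v$ and is $0$ otherwise; in the former case $\deg(v) = \deg(w)$, so the two prefactors coincide, and in the latter case both entries vanish. Hence the two products have identical entries, which is the claim. I do not anticipate a genuine obstacle here: the whole content of the lemma is the alignment between the definition of $D_e$, whose diagonal entry for the arc $(u,v)$ is the degree of the \emph{head} $v$, and the definition of $S$, which supports its single $1$ in row $(u,v)$ in exactly column $v$. The only point requiring care is bookkeeping of the indexing conventions so that the head-vertex degree used in $D_e$ is matched against the column where $S$ is nonzero.
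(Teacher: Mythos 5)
Your proposal is correct and follows essentially the same approach as the paper: both verify the identity entrywise, showing that the $((u,v),w)$ entry of each product equals $1/\deg(v)$ when $w=v$ and $0$ otherwise. Your write-up simply spells out the details (the row-scaling and column-scaling interpretations of the diagonal multiplications) that the paper compresses into the single phrase ``a computation reveals.''
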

        \begin{proof}
            A computation reveals that 
            \begin{equation*}
                (D_e^{-1}S)_{(u,v),j} = (SD^{-1})_{(u,v),j} = \begin{cases}
                        1 / \deg(v) &\text{if } j = v; \\
                        0           &\text{else.}
                    \end{cases}
            \end{equation*}
        \end{proof}

        \begin{theorem}\label{thm:kemSRWedge}
            Let $G$ be a connected graph with $|V(G)|=n$ and $|E(G)|=m$. Then
            \begin{equation*}
                \kemeny_e(G) = \kemeny_v(G) + 2m-n.
            \end{equation*}
        \end{theorem}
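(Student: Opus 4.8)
The plan is to compare the spectra of the two transition matrices $P = D^{-1}A$ and $P_e = D_e^{-1}C$ and then apply the eigenvalue formula for Kemeny's constant (Lemma \ref{lem:kemeny_def}). The crucial point is that although $P_e$ is $2m \times 2m$ while $P$ is only $n \times n$, the two matrices will be shown to share all of their nonzero eigenvalues; the remaining $2m-n$ eigenvalues of $P_e$ will all turn out to equal $0$, and each such eigenvalue contributes exactly $\tfrac{1}{1-0}=1$ to the Kemeny sum, producing the additive term $2m-n$.

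First I would rewrite $P_e$ using Lemma \ref{lem:DSisSD}. Since $C = ST$ and $D_e^{-1}S = SD^{-1}$, we have
\[
P_e = D_e^{-1}ST = (D_e^{-1}S)T = SD^{-1}T.
\]
Now set $X = SD^{-1}$ (of size $2m \times n$) and $Y = T$ (of size $n \times 2m$), so that $P_e = XY$. The purpose of this factorization is that the reversed product
\[
YX = TSD^{-1} = AD^{-1}
\]
is an $n \times n$ matrix similar to $P = D^{-1}A$, since $D^{-1}(AD^{-1})D = D^{-1}A$; hence $YX$ has exactly the spectrum of $P$.

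Next I would invoke the standard fact that, for rectangular matrices, the products $XY$ and $YX$ have the same nonzero eigenvalues with the same algebraic multiplicities. Because $XY = P_e$ is $2m \times 2m$ while $YX$ is $n \times n$, the larger product carries precisely the $n$ eigenvalues of $YX$ together with an additional $2m-n$ eigenvalues equal to $0$. Combined with the previous step this gives
\[
\operatorname{spec}(P_e) = \operatorname{spec}(P) \cup \{\underbrace{0, \ldots, 0}_{2m-n}\}.
\]
In particular every eigenvalue of $P_e$ is real. Moreover, since $G$ is connected, $P$ is irreducible and so $1$ is a simple eigenvalue of $P$; as the extra eigenvalues of $P_e$ all sit at $0$, the value $1$ remains a simple eigenvalue of $P_e$, so the Kemeny sum is well-defined and the new eigenvalues never collide with $1$.

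Finally I would apply Lemma \ref{lem:kemeny_def} to $P_e$. Writing $1 = \rho_1 > \rho_2 \geq \cdots \geq \rho_n$ for the eigenvalues of $P$, the non-Perron eigenvalues of $P_e$ are $\rho_2, \ldots, \rho_n$ together with $2m-n$ copies of $0$, whence
\[
\kemeny_e(G) = \sum_{i=2}^n \frac{1}{1-\rho_i} + \sum_{j=1}^{2m-n}\frac{1}{1-0} = \kemeny_v(G) + (2m-n),
\]
identifying the first sum as $\kemeny_v(G)$ by Lemma \ref{lem:kemeny_def} once more. I expect the main obstacle to be justifying the $XY$/$YX$ spectral correspondence at the level of multiplicities rather than merely as sets, and confirming that none of the $2m-n$ extra eigenvalues equals $1$; both concerns are resolved by the similarity $YX \sim P$ together with the simplicity of the eigenvalue $1$ for $P$ noted above.
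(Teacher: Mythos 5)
Your proposal is correct and is essentially the paper's own proof: the paper likewise factors $P_e$ as a product of rectangular matrices using Lemma \ref{lem:DSisSD}, applies the $XY$/$YX$ spectral correspondence (in the form of the block-matrix similarity from Horn and Johnson, which is exactly the standard fact you cite) to conclude that the spectrum of $P_e$ is the spectrum of $P$ together with $2m-n$ extra zero eigenvalues, and then invokes Lemma \ref{lem:kemeny_def}. Your added verification that $1$ remains a simple eigenvalue of $P_e$ is a minor point the paper leaves implicit.
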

        \begin{proof}
            Let $S, T, A, D$, and $D_e$ be as in Definition \ref{def:nbmatrices}. 
            Recall that $A, D$ are symmetric matrices, and we denote by
            $X \sim Y$ the condition that $X$ and $Y$ are similar matrices.  Note that 
            \begin{align*}
                \begin{bmatrix}
                    P_e & 0 \\
                    T & 0
                \end{bmatrix} &=\begin{bmatrix}
                    D_e^{-1} ST & 0 \\
                    T & 0
                \end{bmatrix} \\
                &\sim \begin{bmatrix}
                    0 & 0 \\
                    T & T D_e^{-1} S
                \end{bmatrix} &\text{ (Theorem 1.3.22 of \cite{horn2012matrix})}\\
                &=\begin{bmatrix}
                    0 & 0 \\
                    T & TS D^{-1}
                \end{bmatrix}&\text{(Lemma \ref{lem:DSisSD})}\\
                &=\begin{bmatrix}
                    0 & 0 \\
                    T &  AD^{-1}
                \end{bmatrix} \\
                &\sim\begin{bmatrix}
                    0 & 0 \\
                    T & D^{-1} A
                \end{bmatrix}\\
                &=\begin{bmatrix}
                    0 & 0 \\
                    T & P
                \end{bmatrix}.
            \end{align*}
            Therefore the eigenvalues of $P_e$ are the eigenvalues of $P$ with an additional $2m-n$ zero
            eigenvalues. 
            Suppose that the eigenvalues of $P_e$ are ordered so that $\rho_1(P_e) = 1$ and the first $n$ eigenvalues are those shared with $P$. 
            It then follows that
            \begin{equation*}
                \kemeny_e(G)
                = \sum_{i=2}^{2m}\frac{1}{1-\rho_i(P_e)}
                = \sum_{i=2}^n\frac{1}{1-\rho_i(P)} + \sum_{i=n+1}^{2m}\frac{1}{1-0}
                = \kemeny_v(G) + 2m-n.
            \end{equation*}
        \end{proof}

        The remainder of the work in this paper explores the relationship between  the vertex, edge,
        and non-backtracking variants of Kemeny's constant by analysing and deriving relationships
        between these for certain families of graphs. In Section 3, we compare 
        $\kemeny_e(G)$ and $\kemeny_{nb}(G)$ for regular graphs by considering the difference and ratio of these quantities, and in Section 4 we explore the same
        for biregular graphs. In Section 5 we derive exact results for cycle barbell graphs, a new
        family we define to better outline and explore the differences between the non-backtracking and
        vertex Kemeny's constant, in order to develop our intuition around the behavior of Kemeny's
        constant and quantifying how it is affected by imposing non-backtracking conditions on a random
        walk.

\section{Regular Graphs}

    In this section we consider connected $d$-regular graphs with $d \geq 3$. 
    We do not consider regular graphs
    of lower degree as in those cases the non-backtracking Kemeny's constant will not be well defined (for $d=2$, the matrix $P_{nb}$ is reducible). 
    
    While Theorem \ref{thm:kemSRWedge} already gives a nice expression for the edge Kemeny's constant,
    we now write it in terms of the adjacency eigenvalues in the case of regular graphs, which will be
    useful to make the comparison between the edge Kemeny's constant and the non-backtracking Kemeny's
    constant.

    \begin{lemma}\label{lem:dregsimple}
        Let $G$ be a connected $d$-regular graph of order $n$, where $d\geq 3$, with adjacency spectrum $d=\lambda_1 >
        \lambda_2 \geq \cdots \geq \lambda_n$. Then
        \begin{align*}
            \kemeny_e(G) &= n(d-1) + \sum_{i=2}^n\frac{d}{d-\lambda_i}.
        \end{align*}
    \end{lemma}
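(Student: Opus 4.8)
The plan is to combine Theorem \ref{thm:kemSRWedge} with the eigenvalue formula from Lemma \ref{lem:kemeny_def}, specializing both to the $d$-regular setting. First I would invoke Theorem \ref{thm:kemSRWedge}, which gives $\kemeny_e(G) = \kemeny_v(G) + 2m - n$ for any connected graph. For a $d$-regular graph on $n$ vertices we have $2m = nd$, so the additive term simplifies immediately to $2m - n = nd - n = n(d-1)$, which produces the first summand in the claimed expression.

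Next I would evaluate $\kemeny_v(G)$ explicitly. Because $G$ is $d$-regular, the degree matrix is $D = dI$, so the transition matrix for the simple random walk is $P = D^{-1}A = \tfrac{1}{d}A$. Consequently the eigenvalues of $P$ are exactly $\rho_i = \lambda_i / d$, where the $\lambda_i$ are the adjacency eigenvalues; in particular the Perron eigenvalue $\lambda_1 = d$ corresponds to $\rho_1 = 1$, and since $G$ is connected this eigenvalue is simple, so the ordering $1 = \rho_1 > \rho_2 \geq \cdots \geq \rho_n$ required by Lemma \ref{lem:kemeny_def} matches the ordering of the $\lambda_i$.

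Applying Lemma \ref{lem:kemeny_def} then yields
\begin{equation*}
    \kemeny_v(G) = \sum_{i=2}^n \frac{1}{1 - \rho_i} = \sum_{i=2}^n \frac{1}{1 - \lambda_i/d} = \sum_{i=2}^n \frac{d}{d - \lambda_i}.
\end{equation*}
Substituting this together with $2m - n = n(d-1)$ into the identity from Theorem \ref{thm:kemSRWedge} gives the stated formula. There is no real obstacle here: the argument is a direct substitution, and the only point requiring a moment of care is confirming that the Perron eigenvalue is simple (so that the sum genuinely runs over $i = 2, \ldots, n$ with all denominators $d - \lambda_i$ strictly positive), which is guaranteed by the connectedness hypothesis and the assumption $d \geq 3$.
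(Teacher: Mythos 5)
Your proposal is correct and is essentially identical to the paper's own proof: both invoke Theorem \ref{thm:kemSRWedge} with $2m-n = n(d-1)$, note that $P = \tfrac{1}{d}A$ has eigenvalues $\lambda_i/d$, and apply Lemma \ref{lem:kemeny_def} to get $\kemeny_v(G) = \sum_{i=2}^n \frac{d}{d-\lambda_i}$. (One small remark: the positivity of the denominators $d-\lambda_i$ for $i\geq 2$ follows from connectedness alone via simplicity of the Perron eigenvalue; the hypothesis $d\geq 3$ is not needed for that point.)
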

    \begin{proof}
        For a $d$-regular graph on $n$ vertices, $2|E(G)| - |V(G)| = n(d-1)$ and Theorem
        \ref{thm:kemSRWedge} asserts that $\kemeny_e(G) = n(d-1) + \kemeny_v(G)$. Due to the
        regularity of the graph, the transition matrix $P = D^{-1} A = \frac{1}{d}A$ has eigenvalues
        $\rho_i = \lambda_i/d$ and
        \begin{equation*}
            \kemeny_v(G)
            = \sum_{i=2}^n\frac{1}{1-\frac{\lambda_i}{d}}
            = \sum_{i=2}^n\frac{d}{d-\lambda_i}
        \end{equation*}
        as per Lemma \ref{lem:kemeny_def}. The statement follows directly.
        
    \end{proof}


    \begin{theorem}\label{thm:dregnb}
        Let $G$ be a connected $d$-regular graph of order $n$, where $d\geq 3$. Then
        \begin{align*}
            \kemeny_{nb}(G) &= \frac{(d-2)\kemeny_e(G)}{d}+2n+\frac{1}{d-2}-\frac{n}{d}.
        \end{align*}
    \end{theorem}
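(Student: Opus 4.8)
The plan is to spectrally analyze $P_{nb}$ directly and feed its eigenvalues into the formula from Lemma~\ref{lem:kemeny_def}. Since $G$ is $d$-regular, every directed edge has head-degree $d$, so $D_e = dI$ and hence
\[
    P_{nb} = (D_e - I)^{-1}B = \tfrac{1}{d-1}B,
\]
where $B = ST - \tau$ is the non-backtracking edge adjacency matrix. Thus the eigenvalues of $P_{nb}$ are exactly $\tfrac{1}{d-1}$ times those of $B$, and the whole problem reduces to understanding the spectrum of $B$.

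The key input is the classical Hashimoto/Ihara--Bass correspondence between the spectrum of $B$ and that of $A$ for a regular graph. Writing $\lambda_1 = d > \lambda_2 \ge \cdots \ge \lambda_n$ for the adjacency spectrum, the $2m$ eigenvalues of $B$ consist of the two roots of $x^2 - \lambda_i x + (d-1) = 0$ for each $i$, together with $+1$ and $-1$, each of multiplicity $m-n$; note $2n + 2(m-n) = 2m$ accounts for the full spectrum. From $\lambda_1 = d$ the quadratic factors as $(x-1)(x-(d-1))$, so its root $d-1$ produces the Perron eigenvalue $1$ of $P_{nb}$ (the unique value excluded from the Kemeny sum), while its companion root $1$ gives the eigenvalue $\tfrac{1}{d-1}$ of $P_{nb}$.

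Next I would evaluate $\kemeny_{nb}(G) = \sum_{\rho \ne 1}\frac{1}{1-\rho}$ by grouping. For the pair $\rho_i^{\pm}$ of eigenvalues of $P_{nb}$ coming from a fixed $\lambda_i$ with $i \ge 2$, Vieta's formulas give $\rho_i^+ + \rho_i^- = \tfrac{\lambda_i}{d-1}$ and $\rho_i^+\rho_i^- = \tfrac{1}{d-1}$, so
\[
    \frac{1}{1-\rho_i^+} + \frac{1}{1-\rho_i^-}
    = \frac{2 - (\rho_i^+ + \rho_i^-)}{1 - (\rho_i^+ + \rho_i^-) + \rho_i^+\rho_i^-}
    = \frac{2(d-1) - \lambda_i}{d - \lambda_i}
    = 1 + \frac{d-2}{d-\lambda_i}.
\]
(This is valid even when the roots are complex, since the sum is real by conjugate symmetry.) The remaining eigenvalues contribute: the companion root of $\lambda_1$ gives $\frac{1}{1 - \frac{1}{d-1}} = \frac{d-1}{d-2}$; the $m-n$ copies of $\tfrac{1}{d-1}$ give $(m-n)\frac{d-1}{d-2}$; and the $m-n$ copies of $-\tfrac{1}{d-1}$ give $(m-n)\frac{d-1}{d}$. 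Summing these with $\sum_{i=2}^n\bigl(1 + \tfrac{d-2}{d-\lambda_i}\bigr)$ yields a closed expression depending on $\sum_{i=2}^n \tfrac{1}{d-\lambda_i}$.

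Finally I would convert back to $\kemeny_e(G)$. From Lemma~\ref{lem:dregsimple}, $\sum_{i=2}^n \tfrac{d}{d-\lambda_i} = \kemeny_e(G) - n(d-1)$, so $\sum_{i=2}^n \tfrac{1}{d-\lambda_i} = \tfrac{\kemeny_e(G) - n(d-1)}{d}$. Substituting this and $m-n = \tfrac{n(d-2)}{2}$ (from $2m = nd$), the $\kemeny_e$-dependence collapses exactly to the coefficient $\tfrac{d-2}{d}$; collecting the three terms carrying a factor $n(d-1)$, whose bracket simplifies to $\tfrac{1}{d}$, the remainder reduces to $2n + \tfrac{1}{d-2} - \tfrac{n}{d}$, which is the claimed identity. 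The main obstacle is the first step: correctly invoking the full multiplicity structure of the $B$-spectrum, and in particular isolating the Perron eigenvalue (from $\lambda_1$) while not double-counting the $\pm 1$ eigenvalues. Once the spectrum is pinned down, the rest is careful but routine bookkeeping.
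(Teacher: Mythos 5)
Your proposal is correct and follows essentially the same route as the paper: the paper invokes the spectrum of $P_{nb}$ for $d$-regular graphs directly (Theorem 5 of \cite{kempton2016non}, which is itself the Ihara--Bass/Hashimoto correspondence you use via $P_{nb} = \tfrac{1}{d-1}B$), isolates the eigenvalues $1$ and $\tfrac{1}{d-1}$ arising from $\lambda_1 = d$, and performs the same pairwise simplification to $1 + \tfrac{d-2}{d-\lambda_i}$ before substituting Lemma \ref{lem:dregsimple} and $m = nd/2$. Your use of Vieta's formulas in place of direct manipulation of the radicals is a minor cosmetic cleanup; the multiplicity accounting and all subsequent algebra match the paper's proof exactly.
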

    \begin{proof}
        Theorem 5 of \cite{kempton2016non} states that the spectrum of the non-backtracking transition
        probability matrix of a $d$-regular graph is
        \begin{equation*}
                \left\{
                  \left(
                        \frac{1}{d-1}
                    \right)^{m-n}, \left(
                        \frac{-1}{d-1}
                    \right)^{m-n}, 
                        \frac{\lambda_i \pm \sqrt{\lambda_i^2-4(d-1)}}{2(d-1)}
                \right\},
        \end{equation*}
        where $\lambda_i$ ranges over the eigenvalues of the adjacency matrix.  Noting that for $\lambda_1=d$, this yields the eigenvalues of $1$ and $\frac{1}{d-1}$, using Lemma \ref{lem:kemeny_def} we obtain:
        \begin{align*}
            \kemeny_{nb}(G)
            &= \frac{m-n+1}{1-\frac{1}{d-1}} + \frac{m-n}{1+\frac{1}{d-1}}
             + \sum_{i=2}^n \left[\frac{1}{1-\frac{\lambda_i+\sqrt{\lambda_i^2-4(d-1)}}{2(d-1)}}+ \frac{1}{1-\frac{\lambda_i-\sqrt{\lambda_i^2-4(d-1)}}{2(d-1)}}\right] \\
            &= (d-1)\left[\frac{nd(d-2)+2d+n(d-2)^2}{2d(d-2)}\right]
             + \sum_{i=2}^n\frac{d-2+d-\lambda_i}{d-\lambda_i} \\
            &= (d-1)\left[\frac{2nd^2-6nd+2d+4n}{2d(d-2)}\right]
             + \sum_{i=2}^n\left(1+\frac{d-2}{d-\lambda_i}\right) \\
            &= (d-1)\left[\frac{n(d-2)(d-1)+d}{d(d-2)}\right]
             + (n-1)+(d-2)\sum_{i=2}^n\frac{1}{d-\lambda_i} \\
            &= (d-1)\left[\frac{n(d-1)}{d}+\frac{1}{d-2}\right]
             + n-1+\frac{(d-2)}{d}\left(\kemeny_e(G)-n(d-1)\right) \\
            &= 2n -\frac{n}{d}+\frac{(d-2)\kemeny_e(G)}{d}+\frac{1}{d-2}.
        \end{align*}
    \end{proof}

    Now that we have these expressions it is natural to compare them by looking at both the difference
    and ratio of them.

    \begin{theorem}\label{thm:regdiff}
        Let $G$ be a $d$-regular graph, $d\geq 3$, which is not $K_4, K_5$, or $K_{3,3}$. Then
        \[
        \kemeny_e(G)>\kemeny_{nb}(G).
        \]
    \end{theorem}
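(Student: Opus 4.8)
The plan is to reduce the claimed inequality to a single spectral statement, prove that statement by convexity together with the standard trace identity, and then dispose of a short list of small graphs by hand.

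First I would combine Theorem~\ref{thm:dregnb} with Lemma~\ref{lem:dregsimple}. Writing $K_e = \kemeny_e(G)$, the identity of Theorem~\ref{thm:dregnb} shows that $\kemeny_e(G) > \kemeny_{nb}(G)$ is equivalent, after collecting the $K_e$ terms and multiplying through by $d/2 > 0$, to $K_e > nd - \tfrac n2 + \tfrac{d}{2(d-2)}$. Substituting $K_e = n(d-1) + \kemeny_v(G)$ from Lemma~\ref{lem:dregsimple} and cancelling, this becomes exactly
\[
\kemeny_v(G) = \sum_{i=2}^n \frac{d}{d-\lambda_i} > \frac n2 + \frac{d}{2(d-2)}.
\]
Since each term satisfies $\frac{d}{d-\lambda_i} \geq \tfrac12$ (as $\lambda_i \geq -d$), it is convenient to subtract $\frac{n-1}{2}$ from both sides; a short manipulation turns the target into the equivalent form
\[
\sum_{i=2}^n \frac{d+\lambda_i}{d-\lambda_i} > \frac{2(d-1)}{d-2}.
\]

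Next I would exploit the fact that $f(\lambda) := \frac{d+\lambda}{d-\lambda} = \frac{2d}{d-\lambda} - 1$ is strictly convex on $(-d,d)$. Because $G$ is connected and $\operatorname{tr}(A)=0$, the non-Perron eigenvalues satisfy $\sum_{i=2}^n \lambda_i = -d$, so their average is $-d/(n-1)$. Jensen's inequality then yields
\[
\sum_{i=2}^n f(\lambda_i) \;\geq\; (n-1)\, f\!\left(\frac{-d}{n-1}\right) = \frac{(n-1)(n-2)}{n},
\]
a bound that is, pleasantly, independent of $d$. It therefore suffices to determine when $\frac{(n-1)(n-2)}{n} > \frac{2(d-1)}{d-2}$. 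The left-hand side increases in $n$ and $n \geq d+1$, so for fixed $d$ the worst case is $n=d+1$; one checks that $\frac{d(d-1)}{d+1} > \frac{2(d-1)}{d-2} \iff d^2-4d-2>0$, which holds for every $d \geq 5$. Comparing the two sides for the remaining degrees (and using that cubic graphs have even order, so $n\geq d+1$) leaves only the pairs $(d,n)\in\{(3,4),(3,6),(4,5)\}$ uncovered by the convexity estimate.

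Finally I would dispatch these finitely many cases by listing the connected regular graphs of the relevant orders and evaluating $\sum_{i=2}^n f(\lambda_i)$ directly from their adjacency spectra. Here $K_4$ and $K_5$ are the unique graphs for $(3,4)$ and $(4,5)$, while the only connected cubic graphs on six vertices are $K_{3,3}$ and the triangular prism $C_3\,\square\,K_2$. A direct spectral computation shows that $K_4$, $K_5$, and $K_{3,3}$ fail the inequality (with $K_{3,3}$ giving exact equality, i.e.\ $\kemeny_e=\kemeny_{nb}$), whereas the prism satisfies it, which is precisely the list of exceptions in the statement. The main obstacle is exactly this boundary behaviour: the inequality is tight at the small complete and complete-bipartite graphs, so the clean Jensen bound is necessarily too weak there and the exceptional graphs must be isolated and verified one by one; everything outside this finite list follows uniformly from the convexity argument.
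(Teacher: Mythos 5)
Your proposal is correct and follows essentially the same route as the paper: both reduce the claim via Theorem~\ref{thm:dregnb} and Lemma~\ref{lem:dregsimple} to a lower bound on $\kemeny_v(G)$, and your Jensen estimate $\sum_{i=2}^n \tfrac{d+\lambda_i}{d-\lambda_i} \geq \tfrac{(n-1)(n-2)}{n}$ is algebraically identical to the bound $\kemeny_v(G) \geq \kemeny_v(K_n) = \tfrac{(n-1)^2}{n}$ that the paper cites, so both arguments arrive at the same polynomial inequality $n^2(d-2)+n(8-5d)+2d-4>0$, the same uncovered pairs $(d,n)$, and the same finite check yielding the exceptions $K_4$, $K_5$, $K_{3,3}$ (with equality for $K_{3,3}$). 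The only difference is cosmetic: you re-derive the complete-graph lower bound from convexity and the trace identity, making the regular case self-contained, whereas the paper invokes it as a known extremal result.
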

    \begin{proof}
        From Lemma \ref{lem:dregsimple} and Theorem \ref{thm:dregnb} we have the following.
        \begin{align*}
            \kemeny_e(G) - \kemeny_{nb}(G)
            &= \kemeny_e(G) - \left(
                \frac{(d-2) \kemeny_e(G)}{d} + 2n + \frac{1}{d-2} - \frac{n}{d}
            \right) \\
            &= \frac{n}{d} - 2n - \frac{1}{d-2} + \kemeny_e(G) \left(1 - \frac{d-2}{d}\right) \\
            &= \frac{2\kemeny_e(G)}{d}-\frac{1}{d-2}-n\left(2-\frac{1}{d}\right) \\
            &= \frac{2\left(n(d-1)+\kemeny_v(G)\right)}{d}-\frac{1}{d-2}-n\left(2-\frac{1}{d}\right).
        \end{align*}
        As $d\geq3$, to see when $\kemeny_e(G) - \kemeny_{nb}(G)\geq0$, we can consider when $d(d-2)(\kemeny_e(G) -\kemeny_{nb}(G))\geq0$. 
        Simplifying this expression, we have
        \begin{align*}
            d(d-2)\left(\kemeny_e(G) - \kemeny_{nb}(G)\right) & =2n(d-2)(d-1)+2(d-2)\kemeny_v(G)-d-n(d-2)(2d-1)\\
            & = 2(d-2)\kemeny_v(G)-d-n(d-2).
        \end{align*}
        It is known that $\kemeny_v(G) \geq \kemeny_v(K_n) = \frac{(n-1)^2}{n}$. Substituting this into the above expression gives the following inequality:
        \begin{eqnarray*}
            d(d-2)\left(\kemeny_e(G) - \kemeny_{nb}(G)\right)&\geq& \frac{2(d-2)(n-1)^2}{n}-d-n(d-2)\\
            & = & \tfrac{1}{n}(2(d-2)(n-1)^2-nd-n^2(d-2))\\
            & = & \tfrac{1}{n}(d(n^2-5n+2) - 2(n^2-4n+2))\\
            & = & \tfrac{1}{n}(n^2(d-2)+n(8-5d)+2d-4).
        \end{eqnarray*}
        Analyzing the expression we see that it is positive for $d=3$ when $ n\geq 7$ and for $d=4,5,6$
        when $n\geq 6$. For $d>6$ the expression will be positive whenever $n \geq 5$. Thus
        $\kemeny_e(G) > \kemeny_{nb}(G)$ except for potentially a finite number of graphs. Checking
        these, we find that there are only three regular graphs for which $\kemeny_{nb}(G) \geq
        \kemeny_e(G)$, namely $K_4, K_5,$ and $K_{3,3}$ with equality holding for $K_{3,3}$.
    \end{proof}

    
    
    \begin{theorem}\label{thm:kemratio}
        Let $G$ be a $d$-regular graph, $d\geq 3$, which is not $K_4$, $K_5$ or $K_{3, 3}$. Then \[1-\frac{2}{d} \; < \; \frac{\kemeny_{nb}(G)}{\kemeny_e(G)} \;< \; 1.\]
    \end{theorem}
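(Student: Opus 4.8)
The plan is to derive both inequalities from the identity established in Theorem~\ref{thm:dregnb}, namely
\[
\kemeny_{nb}(G) = \frac{(d-2)\kemeny_e(G)}{d} + 2n + \frac{1}{d-2} - \frac{n}{d}.
\]
Dividing through by $\kemeny_e(G)$ gives
\[
\frac{\kemeny_{nb}(G)}{\kemeny_e(G)} = \frac{d-2}{d} + \frac{1}{\kemeny_e(G)}\left(2n + \frac{1}{d-2} - \frac{n}{d}\right),
\]
so the ratio equals $1 - \tfrac{2}{d}$ plus a correction term. The upper bound $\kemeny_{nb}(G)/\kemeny_e(G) < 1$ is immediate from Theorem~\ref{thm:regdiff}, which already asserts $\kemeny_e(G) > \kemeny_{nb}(G)$ for every $d$-regular graph other than $K_4, K_5, K_{3,3}$; since $\kemeny_e(G) > 0$, dividing the strict inequality by $\kemeny_e(G)$ gives the right-hand bound with no further work. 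So the real content is the lower bound.

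For the lower bound, observe from the displayed identity that
\[
\frac{\kemeny_{nb}(G)}{\kemeny_e(G)} - \left(1 - \frac{2}{d}\right) = \frac{1}{\kemeny_e(G)}\left(2n + \frac{1}{d-2} - \frac{n}{d}\right).
\]
Thus it suffices to show the quantity $2n + \tfrac{1}{d-2} - \tfrac{n}{d}$ is strictly positive. First I would simplify: writing $2n - \tfrac{n}{d} = n\bigl(2 - \tfrac{1}{d}\bigr) = \tfrac{n(2d-1)}{d}$, the expression becomes $\tfrac{n(2d-1)}{d} + \tfrac{1}{d-2}$, which is manifestly positive for all $n \geq 1$, $d \geq 3$. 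Since $\kemeny_e(G) > 0$ as well, the entire correction term is positive, giving the strict lower bound $1 - \tfrac{2}{d} < \kemeny_{nb}(G)/\kemeny_e(G)$.

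There is essentially no hard part here: the theorem is an almost-immediate corollary of the algebraic identity in Theorem~\ref{thm:dregnb} together with the difference inequality of Theorem~\ref{thm:regdiff}, and the only genuine computation is checking positivity of a single explicitly-nonnegative expression. The one point to be careful about is the role of the excluded graphs $K_4, K_5, K_{3,3}$: these are excluded precisely because Theorem~\ref{thm:regdiff} fails for them (it gives $\kemeny_{nb}(G) \geq \kemeny_e(G)$), so the upper bound $<1$ would not hold. The lower bound, by contrast, holds for \emph{all} connected $d$-regular graphs with $d \geq 3$ since it requires only positivity of the correction term; so if one wished, one could state the lower bound without the exclusions, but the theorem as written keeps the same hypothesis for both bounds for uniformity. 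I would therefore present the upper bound as a one-line consequence of Theorem~\ref{thm:regdiff} and devote the body of the proof to the (short) positivity argument for the lower bound.
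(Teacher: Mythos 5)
Your proposal is correct and follows essentially the same route as the paper's proof: both divide the identity of Theorem~\ref{thm:dregnb} by $\kemeny_e(G)$ to write the ratio as $1-\tfrac{2}{d}$ plus a manifestly positive correction term (giving the lower bound), and both obtain the upper bound directly from Theorem~\ref{thm:regdiff}. Your added observation that the lower bound holds without excluding $K_4$, $K_5$, $K_{3,3}$ is a nice refinement, but the core argument is identical to the paper's.
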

    \begin{proof}
        Using Lemma \ref{lem:dregsimple} and Theorem \ref{thm:dregnb} we see that
        \begin{eqnarray*}
            \frac{\kemeny_{nb}(G)}{\kemeny_e(G)}
            &=& \frac{d-2}{d} + \frac{2n}{\kemeny_e(G)} + \frac{1}{(d-2)\kemeny_e(G)}
             - \frac{n}{d\kemeny_e(G)} \\
            &=& 1 - \frac 2d + \frac{2n}{\kemeny_e(G)} + \frac{1}{(d-2)\kemeny_e(G)} 
             - \frac{n}{d\kemeny_e(G)}.
        \end{eqnarray*}
        The lower bound is easy to see since
        $\frac{2n}{\kemeny_e(G)}+\frac{1}{(d-2)\kemeny_e(G)}-\frac{n}{d\kemeny_e(G)} > 0$ for all
        values $n, d$ consistent with a regular graph. The upper bound follows from Theorem \ref{thm:regdiff}.
    \end{proof}
    After obtaining these bounds, one might ask how good they are. To investigate this question, we consider graph families which are extremal or conjectured to be extremal in some way.
    \begin{example}
        Complete graphs are known to have the smallest vertex Kemeny's constant among graphs of order $n$. Using the above results it is seen that $\lim_{n \to \infty} \kemeny_e(G)
    - \kemeny_{nb}(G) = 1$, and consequently $\lim_{n\to \infty}\frac{\kemeny_{nb}(G)}{\kemeny_e(G)} = 1$.
    \end{example}
    
    \begin{example}
        Necklace graphs are families of 3-regular graphs known to have large vertex Kemeny's constant.  Indeed, it is conjectured in \cite[Open Problem 6.14]{aldous-fill-2014} that the necklace graph on $n$ vertices is the extremal graph among all regular graphs that maximizes the value of $\kemeny_v(G)$.
        
        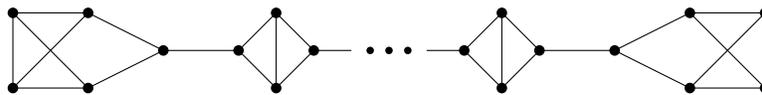
\begin{figure}[H]
        \centering
        \begin{tikzpicture}
        \tikzstyle{every node}=[circle,  fill=black, minimum width=4pt, inner sep=1pt]
            \draw{
            (0,0)node{}--(0,1)node{}--(1,1)node{}--(2,.5)node{}--(1,0)node{}--(0,0)--(1,1)
            (0,1)--(1,0)
            (2,.5)--(3,.5)node{}--(3.5,1)node{}--(4,.5)node{}--(3.5,0)node{}--(3.5,1)
            (3,.5)--(3.5,0)
            (4,.5)--(4.5,.5) 
            (4.75,0.5)node[minimum size = 1pt]{}
            (5,0.5)node[minimum size = 1pt]{}
            (5.25,0.5)node[minimum size = 1pt]{}
            (5.5,.5)--(6,.5)node{}--(6.5,1)node{}--(7,.5)node{}--(6.5,0)node{}--(6.5,1)
            (6,.5)--(6.5,0)
            (7,.5)--(8,.5)node{}--(9,1)node{}--(10,1)node{}--(10,0)node{}--(9,0)node{}--(8,.5)
            (9,0)--(10,1)
            (10,0)--(9,1)
            };
            \end{tikzpicture}
            \caption{A necklace graph, as in the figure above, is a 3-regular graph on $n = 4k+2$ vertices where there are $k$ subgraphs (referred to as \emph{beads}) linked in a line, and the two subgraphs on the end are as shown, distinct from the $k-2$ in the middle. }
            \label{fig:necklacegraph}
        \end{figure}
        
        As these graphs afford many 1-separations, we can use the methods of \cite{faught2021} to obtain an explicit formula for the vertex Kemeny's constant. If $G$ is a necklace graph on $n$ vertices, these techniques give
        \[
        \kemeny_v(G) = \frac{4n^3+3n^2-122n+216}{16n}.
        \]
        Then applying the previous results gives 
        \begin{align*}
            \kemeny_e(G) &= \frac{4n^3+35n^2-122n+216}{16n} & \kemeny_{nb}(G) &=\frac{4n^3+115n^2-74n+216}{48n}.
        \end{align*}
        From these expressions it is then readily seen that $\lim_{n\to\infty}\frac{\kemeny_{nb}(G)}{\kemeny_e(G)} = 1/3$. That is, the family of necklace graphs achieves the lower bound on the ratio given in Theorem \ref{thm:kemratio} in the limit. 
    \end{example}

        
We remark that, while the upper bound in Theorem \ref{thm:kemratio} is approached by complete graphs, if we fix the degree, we can prove a stronger upper bound.
        
   \begin{theorem}\label{thm:fix-d-upper-ratio}
        For a family $\{G_k\}$ of $d$-regular graphs with $d$ fixed, $d\geq 3$, and $|V(G_k)|\rightarrow\infty$ as $k\rightarrow\infty$, we have
        \begin{equation*}
            \lim_{k\rightarrow\infty}\frac{\kemeny_{nb}(G_k)}{\kemeny_e(G_k)} \leq 1-\frac{1}{d^2}.
        \end{equation*}
    \end{theorem}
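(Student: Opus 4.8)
The plan is to start from the exact formula for the ratio already obtained inside the proof of Theorem \ref{thm:kemratio}, namely, writing $n = |V(G_k)|$,
\[
\frac{\kemeny_{nb}(G_k)}{\kemeny_e(G_k)} = 1 - \frac{2}{d} + \frac{2n}{\kemeny_e(G_k)} + \frac{1}{(d-2)\kemeny_e(G_k)} - \frac{n}{d\,\kemeny_e(G_k)},
\]
and to analyse the behaviour of each term as $k \to \infty$. First I would note that $\kemeny_e(G_k) \to \infty$: by Lemma \ref{lem:dregsimple} every summand $\frac{d}{d-\lambda_i}$ is positive, so $\kemeny_e(G_k) \ge n(d-1) \to \infty$. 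Hence the stray term $\frac{1}{(d-2)\kemeny_e(G_k)}$ vanishes in the limit, and collecting the two remaining fractions gives
\[
\frac{\kemeny_{nb}(G_k)}{\kemeny_e(G_k)} = 1 - \frac{2}{d} + \frac{2d-1}{d}\cdot\frac{n}{\kemeny_e(G_k)} + o(1).
\]
Everything therefore reduces to bounding $\limsup_k \frac{n}{\kemeny_e(G_k)}$.

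For this key step I would use a sharp lower bound on the vertex Kemeny's constant. By Lemma \ref{lem:dregsimple} (equivalently Theorem \ref{thm:kemSRWedge}) we have $\kemeny_e(G_k) = n(d-1) + \kemeny_v(G_k)$, and I would invoke the known extremal fact $\kemeny_v(G_k) \ge \frac{(n-1)^2}{n}$ (the complete graph minimizes the vertex Kemeny's constant among graphs of order $n$), already used in Theorem \ref{thm:regdiff}. Substituting yields
\[
\frac{n}{\kemeny_e(G_k)} \le \frac{n}{n(d-1) + \frac{(n-1)^2}{n}} = \frac{n^2}{dn^2 - 2n + 1} \xrightarrow[n\to\infty]{} \frac{1}{d},
\]
so $\limsup_k \frac{n}{\kemeny_e(G_k)} \le \frac{1}{d}$. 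Feeding this back gives
\[
\limsup_{k\to\infty}\frac{\kemeny_{nb}(G_k)}{\kemeny_e(G_k)} \le 1 - \frac{2}{d} + \frac{2d-1}{d}\cdot\frac{1}{d} = 1 - \frac{1}{d^2},
\]
which is the claim (and in particular bounds the limit whenever it exists).

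The step I expect to be the crux is the choice of lower bound on $\kemeny_v$. The naive spectral bound $\lambda_i \ge -d$ gives only $\kemeny_e(G_k) \ge n(d-1) + \frac{n-1}{2}$, hence $\frac{n}{\kemeny_e(G_k)} \lesssim \frac{2}{2d-1}$ and merely the trivial limit $1$; it is the tight complete-graph bound $\kemeny_v \ge \frac{(n-1)^2}{n}$ that produces exactly the constant $\frac 1d$ and thus $1 - \frac{1}{d^2}$. That the target is sharp can be checked against extremal families: the necklace graphs give $d=3$ and ratio $\to \tfrac13 \le \tfrac89$, sitting at the \emph{lower} end, while expander-like families force $\kemeny_v(G_k) \approx n$, so that $\frac{n}{\kemeny_e(G_k)} \to \frac 1d$ and the bound $1 - \frac{1}{d^2}$ is approached.
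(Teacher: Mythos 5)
Your proposal is correct and follows essentially the same route as the paper: both start from the ratio expression derived in the proof of Theorem \ref{thm:kemratio}, invoke the extremal bound $\kemeny_v(G_k)\geq\kemeny_v(K_n)=\frac{(n-1)^2}{n}$ together with $\kemeny_e(G_k)=\kemeny_v(G_k)+n(d-1)$, and take the limit. Your only (minor, favorable) deviation is collecting the terms $\frac{2n}{\kemeny_e}-\frac{n}{d\kemeny_e}$ into a single positive multiple of $\frac{n}{\kemeny_e}$ before applying the lower bound on $\kemeny_e$, which makes the direction of the inequality transparent, whereas the paper substitutes its bound $\kemeny_e(G_k)\geq nd-2$ into each term separately.
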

    \begin{proof}
        We bound the expression in the proof of Theorem \ref{thm:kemratio}.  Let $G_k$ be a graph from the family with $n$ vertices and $m$ edges. By Theorem \ref{thm:kemSRWedge},
        \begin{equation*}
            \kemeny_e(G_k) =  \kemeny_v(G_k) + 2m-n = \kemeny_v(G_k) + n(d-1).
        \end{equation*}
        Since the complete graph has the smallest vertex Kemeny's constant for any graph on $n$ vertices
        we further get that
        \begin{eqnarray*}
            \kemeny_e(G_k) &\geq &\kemeny_v(K_n) + n(d-1)\\
            &=& \frac{(n-1)^2}{n} + n(d-1)\\
            &=& n-2+\frac{1}{n}+n(d-1)\\
            &\geq & nd-2.
        \end{eqnarray*}
        Then using this bound on $\kemeny_e(G_k)$ and taking the limit as $n\to\infty$ the ratio is bounded
        as follows.
        \begin{eqnarray*}
            \lim_{n\to\infty}\frac{\kemeny_{nb}(G_k)}{\kemeny_e(G_k)}
            &\leq &\lim_{n\to\infty} \left(
                1 - \frac 2d + \frac{2n}{nd-2} + \frac{1}{(d-2)(nd-2)} - \frac{n}{d(nd-2)}
            \right) \\
            &= & 1 - \frac{1}{d^2}.
        \end{eqnarray*}
    \end{proof}

\begin{example}
    Given any $d$-regular Ramanujan graph, the ratio of the non-backtracking Kemeny's constant to the edge Kemeny's constant will be close to the upper bound in Theorem \ref{thm:fix-d-upper-ratio}. Recall that a graph is a \emph{Ramanujan graph} if its adjacency eigenvalues have $\lambda_2,|\lambda_n| \leq 2\sqrt{d-1}$. Using this bound on the eigenvalues and Lemma \ref{lem:dregsimple}, we can show 
    \[
    \kemeny_e(G) \leq n\left(d-1 + \frac{d}{d-2\sqrt{d-1}}\right).
    \]
    Then using Theorem \ref{thm:kemratio} leads to the bound
    \[
    \frac{\kemeny_{nb}(G)}{\kemeny_e(G)} \geq 1 - \frac{1}{d^2}\left(\frac{d+2\sqrt{d-1}}{d-2\sqrt{d-1}+\frac{2\sqrt{d-1}}{d}}\right) = 1 - \frac{1}{d^2} - O\left(\frac{1}{d^{5/2}}\right).
    \]
\end{example}

\section{Biregular Graphs}

In this section, we extend results about regular graphs to the case of biregular graphs.  A \emph{$(c,d)$-biregular graph} is a bipartite graph in which every vertex in one part of the bipartition has degree $c$ and every vertex in the other part has degree $d$. See Figure \ref{fig:bireg} for an example.
\begin{figure}[h!]
    \centering
    \begin{tikzpicture}
    \tikzstyle{every node}=[circle,  fill=black, minimum width=4pt, inner sep=1pt]
    \draw (-1,0)node{}--(-1.5,1)node[draw,fill=white]{}--(-2,0)node{}--(-1.5,-1)node[draw,fill=white]{}--(-1,0) (1,0)node{}--(1.5,1)node[draw,fill=white]{}--(2,0)node{}--(1.5,-1)node[draw,fill=white]{}--(1,0) (-1.5,1)--(0,1)node{}--(1.5,1)  (-1.5,-1)--(0,-1)node{}--(1.5,-1);
    \end{tikzpicture}
    \caption{A $(2,3)$-biregular graph on $10$ vertices}
    \label{fig:bireg}
\end{figure}
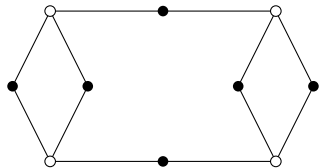


    \begin{lemma}\label{lem:biregsimple}
        Kemeny's constant for a simple random walk on the edge space of a $(c,d)$-biregular graph is given by
        \begin{equation*}
            \kemeny_e(G)=2m-n+\sum_{i=2}^n\frac{\sqrt{cd}}{\sqrt{cd}-\lambda_i}.
        \end{equation*}
    \end{lemma}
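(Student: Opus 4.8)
The plan is to reduce the claim to a computation of the vertex Kemeny's constant, exactly as was done for regular graphs in Lemma~\ref{lem:dregsimple}. Since a $(c,d)$-biregular graph is in particular connected, Theorem~\ref{thm:kemSRWedge} applies and gives $\kemeny_e(G) = \kemeny_v(G) + 2m - n$. Thus the entire task is to show that
\[
\kemeny_v(G) = \sum_{i=2}^n \frac{\sqrt{cd}}{\sqrt{cd}-\lambda_i},
\]
where $\lambda_1 \geq \cdots \geq \lambda_n$ are the adjacency eigenvalues of $G$. By Lemma~\ref{lem:kemeny_def}, this amounts to identifying the eigenvalues $\rho_i$ of the transition matrix $P = D^{-1}A$ as $\rho_i = \lambda_i/\sqrt{cd}$.

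The key step is an eigenvalue computation exploiting the bipartite biregular structure. First I would order the vertices so that the part with degree $c$ comes first and the part with degree $d$ second; then $A$ is block anti-diagonal and $D$ is block diagonal with diagonal blocks $cI$ and $dI$. Since $P = D^{-1}A$ is similar to the symmetric normalized adjacency matrix $D^{-1/2}AD^{-1/2}$, its eigenvalues are real and coincide with those of $D^{-1/2}AD^{-1/2}$. The crucial observation is that, because the two diagonal blocks of $A$ vanish and the off-diagonal blocks each get scaled by the common factor $c^{-1/2}d^{-1/2}$, one obtains
\[
D^{-1/2}AD^{-1/2} = \frac{1}{\sqrt{cd}}\,A.
\]
Consequently the eigenvalues of $P$ are precisely $\rho_i = \lambda_i/\sqrt{cd}$. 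In particular the Perron value $\rho_1 = 1$ forces $\lambda_1 = \sqrt{cd}$, which is consistent with the known spectral radius of a biregular graph and confirms that the $i=1$ term is correctly excluded from the sum.

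Substituting $\rho_i = \lambda_i/\sqrt{cd}$ into Lemma~\ref{lem:kemeny_def} yields
\[
\kemeny_v(G) = \sum_{i=2}^n \frac{1}{1 - \lambda_i/\sqrt{cd}} = \sum_{i=2}^n \frac{\sqrt{cd}}{\sqrt{cd} - \lambda_i},
\]
and adding $2m - n$ via Theorem~\ref{thm:kemSRWedge} gives the stated formula. I do not expect a serious obstacle here: the argument is almost entirely parallel to the $d$-regular case. The one genuinely load-bearing point, which I would state explicitly rather than gloss over, is the identity $D^{-1/2}AD^{-1/2} = (cd)^{-1/2}A$; this collapse to a single scalar multiple of $A$ uses \emph{both} biregularity (degrees are constant within each part, so each diagonal block of $D^{-1/2}$ is scalar) and bipartiteness (the diagonal blocks of $A$ are zero, so no mismatched scaling $c^{-1}$ or $d^{-1}$ survives). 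Without bipartiteness this reduction fails, which is why the result is specific to biregular bipartite graphs.
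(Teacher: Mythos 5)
Your proposal is correct and takes essentially the same route as the paper: reduce to the vertex Kemeny's constant via Theorem \ref{thm:kemSRWedge}, identify the eigenvalues of $P=D^{-1}A$ as $\lambda_i/\sqrt{cd}$, and apply Lemma \ref{lem:kemeny_def}. The only difference is that the paper cites this spectral fact (from the proof of Corollary 2 in \cite{kempton2016non}), whereas you prove it directly via the identity $D^{-1/2}AD^{-1/2}=(cd)^{-1/2}A$, which is a correct and self-contained justification using exactly the bipartite-biregular structure you highlight.
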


    \begin{proof}
    As in the regular case, Theorem \ref{thm:kemSRWedge} already gives a nice expression for $\kemeny_e(G)$. We give this expression in terms of the adjacency eigenvalues of $G$ to assist the comparison with $\kemeny_{nb}(G)$ later.
    
    It is known that for a bipartite biregular graph, the eigenvalues of the transition probability matrix are of the form $\lambda_i/\sqrt{cd}$ where $\lambda_i$ is an eigenvalue of the adjacency matrix (see proof of Corollary 2 in \cite{kempton2016non}). Using Lemma \ref{lem:kemeny_def} gives the result.
    
    \end{proof}
    
    \begin{theorem}\label{thm:biregnbkem}
        Let $G$ be a $(c,d)-$biregular graph, $r$ be the number of vertices with degree $c$, and $s$
        the number of vertices with degree $d$. Without loss of generality suppose that $r\geq s$. Then 
        \begin{align*}
            \kemeny_{nb}(G) &= \frac{2(m-n+1)(c-1)(d-1)}{(c-1)(d-1)-1} + \frac{2(r-s)(d-1)}{d}+\frac{1}{2}\\
            &\qquad \qquad+ 2(s-1) + \frac{cd-c-d}{cd}\left[\kemeny_e(G) - 2m + n-\frac{1}{2}-(r-s)\right].
        \end{align*}
    \end{theorem}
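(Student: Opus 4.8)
The plan is to follow the template of Theorem~\ref{thm:dregnb}: determine the full spectrum of $P_{nb}$ for a $(c,d)$-biregular graph, substitute it into the eigenvalue formula of Lemma~\ref{lem:kemeny_def}, evaluate the resulting sum in closed form, and then use Lemma~\ref{lem:biregsimple} to re-express the part depending on the adjacency spectrum in terms of $\kemeny_e(G)$. The spectrum can be read off from \cite{kempton2016non}, or derived directly: the Ihara--Bass identity $\det(I-uB)=(1-u^2)^{m-n}\det(I-uA+u^2(D-I))$ gives the eigenvalues of the Hashimoto matrix $B$, and since $G$ is bipartite both $B$ and $P_{nb}$ have zero diagonal blocks relative to the partition of oriented edges by the side of their head, so their nonzero eigenvalues occur in $\pm$ pairs and those of $P_{nb}$ are exactly those of $B$ scaled by $1/\sqrt{(c-1)(d-1)}$. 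Writing $P=(c-1)(d-1)$ and factoring $\det(I-uA+u^2(D-I))$ through the bipartite block form of $A$, I expect the spectrum of $P_{nb}$ to split into three groups: a pair $\pm 1/\sqrt P$ of multiplicity $m-n$; a conjugate pair $\pm i/\sqrt{d-1}$ of multiplicity $r-s$ (here the hypothesis $r\geq s$ selects which degree appears); and, for each adjacency eigenvalue $\lambda_i$, the four roots of $P\mu^4+(c+d-2-\lambda_i^2)\mu^2+1=0$. The Perron value $\lambda_1=\sqrt{cd}$ is degenerate, since its quartic factors as $(\mu^2-1)(P\mu^2-1)$, contributing $\mu=\pm1$ together with an extra pair $\pm 1/\sqrt P$.

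Next I evaluate $\kemeny_{nb}(G)=\sum_{\mu\neq 1}\tfrac{1}{1-\mu}$ group by group, repeatedly using that a $\pm$ pair contributes $\tfrac{1}{1-\mu_0}+\tfrac{1}{1+\mu_0}=\tfrac{2}{1-\mu_0^2}$. The multiplicity-$(m-n)$ group then gives $(m-n)\tfrac{2P}{P-1}$, the imaginary group gives $(r-s)\tfrac{2(d-1)}{d}$, and from $\lambda_1$ we discard the Perron value $\mu=1$, keep $\mu=-1$ (contributing $\tfrac12$), and pick up one further $\tfrac{2P}{P-1}$ from the extra $\pm1/\sqrt P$ pair. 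For a generic $\lambda_i$, substituting $x=\mu^2$ and applying Vieta's formulas to $Px^2+(c+d-2-\lambda_i^2)x+1=0$ yields $\tfrac{1}{1-x_1}+\tfrac{1}{1-x_2}=1+\tfrac{cd-c-d}{cd-\lambda_i^2}$, so each such eigenvalue contributes $2+\tfrac{2(cd-c-d)}{cd-\lambda_i^2}$.

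Collecting terms, the two $\tfrac{2P}{P-1}$ contributions combine into $(m-n+1)\tfrac{2(c-1)(d-1)}{(c-1)(d-1)-1}$, the constants $2$ summed over the $s-1$ nontop singular-value pairs give $2(s-1)$, and the imaginary group and the stray $\mu=-1$ supply $\tfrac{2(r-s)(d-1)}{d}$ and $\tfrac12$. For the last piece I rewrite Lemma~\ref{lem:biregsimple} as $\sum\tfrac{2cd}{cd-\lambda_i^2}=\kemeny_e(G)-2m+n-\tfrac12-(r-s)$, the sum running over the nontop singular-value pairs, whence $\sum\tfrac{2(cd-c-d)}{cd-\lambda_i^2}=\tfrac{cd-c-d}{cd}\bigl(\kemeny_e(G)-2m+n-\tfrac12-(r-s)\bigr)$; this is exactly the bracketed term in the statement, and assembling the pieces gives the claimed identity.

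I expect the genuine difficulty to lie not in any individual computation but in the eigenvalue bookkeeping. One must partition the $n=r+s$ adjacency eigenvalues into the Perron value $+\sqrt{cd}$, its mate $-\sqrt{cd}$ (the unique survivor of the $i\geq 2$ restriction among the top pair, and the source of the loose $\tfrac12$), the $2(s-1)$ values $\pm\lambda_i$ coming from nontop singular values, and the $r-s$ excess zeros that are absorbed into the imaginary group; and one must then check that this accounts for all $2m$ directed-edge states and that each $\pm$ pair is counted once. Threading the top pair correctly---excluding $\mu=1$, retaining $\mu=-1$, and crediting the extra $\pm1/\sqrt P$ pair to $\lambda_1$---is precisely what produces the otherwise mysterious $+\tfrac12$ and the $+1$ inside $(m-n+1)$, and is the easiest place to commit an off-by-one error.
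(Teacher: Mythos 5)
Your proposal is correct and follows essentially the same route as the paper's proof: obtain the spectrum of $P_{nb}$ (the paper cites \cite{kempton2016non}; you also sketch an Ihara--Bass derivation), sum $\frac{1}{1-\mu}$ group by group via Lemma \ref{lem:kemeny_def} with the Perron value $\lambda_1=\sqrt{cd}$ treated separately, reduce each quartic's contribution to $2+\frac{2(cd-c-d)}{cd-\lambda_k^2}$, and convert $\sum_{k=2}^s\frac{2cd}{cd-\lambda_k^2}$ into $\kemeny_e(G)$ using Lemma \ref{lem:biregsimple} and the symmetry of the bipartite spectrum. One point in your favor: your identification of the imaginary pair as $\pm i/\sqrt{d-1}$ is the correct one; the paper's proof lists these eigenvalues as $\pm i/\sqrt{(c-1)(d-1)}$, which appears to be a typo (the two agree only when $c=2$), although the paper's subsequent arithmetic $\frac{2(r-s)(d-1)}{d}$ matches your value.
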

    \begin{proof}
From \cite{kempton2016non} we know that the eigenvalues of the non-backtracking transition probability matrix for a $(c,d)$-biregular graph $G$ are as follows:
\[\{(\pm\alpha)^{m-n}, (\pm i\alpha)^{r-s}, \pm \sqrt{A_k\pm B_k}\},\]
where
\begin{eqnarray*}
\alpha & = & \frac{1}{\sqrt{(c-1)(d-1)}};\\
A_k & = & \frac{\lambda_k^2 - (c-1) - (d-1)}{2(c-1)(d-1)};\\
B_k & = & \frac{(\lambda_k^2-(c-1)-(d-1))^2-4(c-1)(d-1)}{4(c-1)^2(d-1)^2};
\end{eqnarray*}
for $k=1, \ldots, s$, and thus $\lambda_k$ ranges over the $s$ largest eigenvalues of the adjacency matrix of $G$.

We calculate $\kemeny_{nb}(G)$ using the formula in Lemma \ref{lem:kemeny_def}. To this end, some simple computation and simplification shows that
\[(m-n)\left(\frac{1}{1-\alpha} + \frac{1}{1+\alpha}\right) = \frac{2(m-n)(c-1)(d-1)}{(c-1)(d-1)-1}\]
and
\[(r-s)\left(\frac{1}{1-i\alpha} + \frac{1}{1+i\alpha}\right) = \frac{2(r-s)(d-1)}{d}.\]
Finally, for fixed $k$ we compute
\[\frac{1}{1-\sqrt{A_k+B_k}}+ \frac{1}{1-\sqrt{A_k+B_k}}+\frac{1}{1+\sqrt{A_k+B_k}}+\frac{1}{1+\sqrt{A_k-B_k}} = \frac{4(1-A_k)}{(1-A_k)^2-B_k}.\]
Some tedious simplification gives the expression
\[2\cdot\frac{(\lambda_k - \sqrt{2cd-c-d})(\lambda_k+\sqrt{2cd-c-d})}{(\lambda_k-\sqrt{cd})(\lambda_k+\sqrt{cd})}.\]

 In this form it is clear that we must look at the four eigenvalues that come from the adjacency matrix eigenvalue $\sqrt{cd}$ separately.
        A straightforward computation reveals that the eigenvalue $\sqrt{cd}$ of the adjacency matrix will give rise to the following eigenvalues for the non-backtracking transition probability matrix:
        \[
        1, -1, \frac{1}{\sqrt{(c-1)(d-1)}}, -\frac{1}{\sqrt{(c-1)(d-1)}}.
        \]
        Combining everything we have currently gives
        \begin{align*}
            \kemeny_{nb}(G) &= \frac{2(m-n+1)(c-1)(d-1)}{(c-1)(d-1) - 1} + \frac{2(r-s)(d-1)}{d}+\frac12\\
            & \qquad \qquad + 2\sum_{i=2}^s\frac{(\lambda_i - \sqrt{2cd-c-d})(\lambda_i + \sqrt{2cd-c-d})}{(\lambda_i-\sqrt{cd})(\lambda_i+\sqrt{cd})}.
        \end{align*}
        However, further work will allow for easier comparison to $\kemeny_e(G)$. We begin by rearranging the summation term.
        \begin{align*}
        \frac{(\lambda_k - \sqrt{2cd-c-d})(\lambda_k+\sqrt{2cd-c-d})}{(\lambda_k-\sqrt{cd})(\lambda_k+\sqrt{cd})} &=1 + \frac{cd-c-d}{(\sqrt{cd}-\lambda_k)(\sqrt{cd}+\lambda_k)}\\
        &= 1 + (cd-c-d)\left[\frac{1}{2\sqrt{cd}(\sqrt{cd}-\lambda_k)} +\frac{1}{2\sqrt{cd}(\sqrt{cd}+\lambda_k)} \right]
        \end{align*}
Then, using Lemma \ref{lem:biregsimple} and the fact that the adjacency spectrum is symmetric about 0 with null space at least dimension $r-s$, we get
\begin{align*}
            &2\sum_{k=2}^s\frac{(\lambda_k - \sqrt{2cd-c-d})(\lambda_k + \sqrt{2cd-c-d})}{(\lambda_k-\sqrt{cd})(\lambda_k+\sqrt{cd})}\\
            &= 2\sum_{k=2}^s\left(1 + \frac{cd-c-d}{(\sqrt{cd}-\lambda_k)(\sqrt{cd}+\lambda_k)}\right)\\
            &= 2(s-1) + 2(cd-c-d)\sum_{k=2}^s\frac{1}{(\sqrt{cd}+\lambda_k)(\sqrt{cd}-\lambda_k)}\\
            &= 2(s-1) + \frac{cd-c-d}{\sqrt{cd}}\sum_{k=2}^s\left(\frac{1}{\sqrt{cd}+\lambda_k} + \frac{1}{\sqrt{cd}-\lambda_k}\right)\\
            &= 2(s-1) + \frac{cd-c-d}{\sqrt{cd}}\left[\sum_{k=2}^n\left(\frac{1}{\sqrt{cd}-\lambda_k}\right)- \left(\frac{1}{2\sqrt{cd}} + \frac{(r-s)}{\sqrt{cd}}\right)\right]\\
            &= 2(s-1) + \frac{cd-c-d}{\sqrt{cd}}\left(\frac{\kemeny_e(G) - 2m + n}{\sqrt{cd}}\right)-\frac{cd-c-d}{\sqrt{cd}}\left(\frac{1}{2\sqrt{cd}} + \frac{r-s}{\sqrt{cd}} \right)\\
            &= 2(s-1) + \frac{cd-c-d}{cd}\left[\kemeny_e(G)\right]+\frac{cd-c-d}{cd}\left[-2m+n-\frac{1}{2}-(r-s)\right].
        \end{align*}
        \end{proof}

    In order to bound $\kemeny_e(G) - \kemeny_{nb}(G)$ it will be useful to have the following bound.
    \begin{lemma}\label{lem:kemebipartitebound}
    If $G$ is bipartite, then
    \[
    \kemeny_e(G) \geq 2m - \frac{3}{2}
    \]
    with equality if and only if $G$ is complete bipartite.
    \end{lemma}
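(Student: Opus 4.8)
The plan is to reduce the statement to a lower bound on the vertex Kemeny's constant and then exploit the symmetry of the spectrum of a bipartite transition matrix. By Theorem \ref{thm:kemSRWedge} we have $\kemeny_e(G) = \kemeny_v(G) + 2m - n$, so the claimed inequality $\kemeny_e(G) \geq 2m - \tfrac{3}{2}$ is equivalent to $\kemeny_v(G) \geq n - \tfrac{3}{2}$, and the equality cases coincide. I would therefore prove that every (connected) bipartite graph satisfies $\kemeny_v(G) \geq n - \tfrac{3}{2}$, with equality exactly for complete bipartite graphs.

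For the lower bound I would invoke Lemma \ref{lem:kemeny_def} together with the fact that the transition matrix $P = D^{-1}A$ of a connected bipartite graph has spectrum symmetric about $0$; this holds because $P$ is similar to the symmetric normalized adjacency matrix $D^{-1/2}AD^{-1/2}$, which is conjugate to its own negative via the diagonal sign matrix $F=\operatorname{diag}(\epsilon_i)$ of the bipartition. In particular $\rho=1$ and $\rho=-1$ are each simple, and the remaining $n-2$ eigenvalues lie in $(-1,1)$ and occur in symmetric pairs $\pm\rho$ (together with some zeros). The eigenvalue $-1$ contributes $\tfrac{1}{1-(-1)}=\tfrac{1}{2}$ to the Kemeny sum, each symmetric pair contributes $\tfrac{1}{1-\rho}+\tfrac{1}{1+\rho}=\tfrac{2}{1-\rho^2}\geq 2$, and each zero contributes $1$. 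Summing over the $n-2$ eigenvalues other than $\pm 1$ gives at least $n-2$, so $\kemeny_v(G)\geq \tfrac12+(n-2)=n-\tfrac32$.

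The main obstacle is the equality analysis. Equality in $\tfrac{2}{1-\rho^2}\geq 2$ forces every eigenvalue other than $\pm 1$ to vanish, so equality in the bound holds precisely when the spectrum of $P$ is $\{1,-1,0^{(n-2)}\}$. I would then translate this spectral condition into a combinatorial one. Writing $\mathcal{A}=D^{-1/2}AD^{-1/2}=\tfrac{1}{2m}uu^{T}-\tfrac{1}{2m}\tilde u\tilde u^{T}$, where $u=(\sqrt{d_i})_i$ is the Perron eigenvector (with $\|u\|^2=2m$) and $\tilde u$ is obtained from $u$ by negating the entries on one side of the bipartition, a direct entrywise computation gives, for $i$ and $j$ in opposite parts, $\mathcal{A}_{ij}=\tfrac{\sqrt{d_id_j}}{m}$, hence $A_{ij}=\tfrac{d_id_j}{m}$. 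Since $A_{ij}\in\{0,1\}$ and all degrees are positive, this forces $A_{ij}=1$ for every cross pair, i.e. $G$ is complete bipartite; conversely a straightforward check (using the biregular eigenvalue fact from Lemma \ref{lem:biregsimple} applied to $K_{a,b}$, whose adjacency spectrum is $\{\pm\sqrt{ab},0^{(n-2)}\}$) shows $K_{a,b}$ has spectrum $\{1,-1,0^{(n-2)}\}$ and attains equality.

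Combining the reduction with the two directions of the equality analysis yields $\kemeny_e(G)\geq 2m-\tfrac32$, with equality if and only if $G$ is complete bipartite. I expect the lower bound to be routine once the symmetric pairing is set up; the delicate point is the clean passage from the rank-two spectral decomposition of $\mathcal{A}$ to the conclusion that every cross pair is an edge, which is what pins down complete bipartiteness rather than a larger family of graphs.
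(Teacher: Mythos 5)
Your proposal is correct, and the lower-bound reduction is exactly the paper's: both arguments apply Theorem \ref{thm:kemSRWedge} to convert $\kemeny_e(G) \geq 2m - \tfrac{3}{2}$ into the vertex-space statement $\kemeny_v(G) \geq n - \tfrac{3}{2}$ with matching equality cases. The difference is what happens next. The paper simply cites this vertex bound (with its characterization of equality) as Proposition 4.1 of \cite{ciardo2020kemeny}, so its entire proof is two lines. You instead prove the vertex bound from scratch: the sign-conjugation $F \mathcal{A} F = -\mathcal{A}$ gives the symmetric spectrum, the pairing $\tfrac{1}{1-\rho} + \tfrac{1}{1+\rho} = \tfrac{2}{1-\rho^2} \geq 2$ plus the contribution $\tfrac{1}{2}$ from the eigenvalue $-1$ yields $\kemeny_v(G) \geq n - \tfrac{3}{2}$, and equality forces the spectrum $\{1, -1, 0^{(n-2)}\}$, which your rank-two decomposition $\mathcal{A} = \tfrac{1}{2m}\left(uu^T - \tilde{u}\tilde{u}^T\right)$ converts into $A_{ij} = d_i d_j / m > 0$ for every cross pair, pinning down complete bipartiteness. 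All steps check out (in particular, simplicity of $\pm 1$ for connected bipartite graphs justifies that $\tilde{u} = Fu$ spans the $(-1)$-eigenspace, and the converse for $K_{a,b}$ follows from its adjacency spectrum $\{\pm\sqrt{ab}, 0^{(n-2)}\}$). What your route buys is self-containment--the lemma no longer depends on an external reference, and the equality analysis is fully transparent; what the paper's route buys is brevity, delegating precisely the part of the argument (the spectral characterization of equality) that takes real work.
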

    \begin{proof}
    In \cite[Prop 4.1]{ciardo2020kemeny} it was shown that $\kemeny_v(G) \geq n - \frac{3}{2}$ with equality if and only if $G$ is complete bipartite. Combining this with Theorem \ref{thm:kemSRWedge} gives the result.
    \end{proof}
    
    \begin{theorem}\label{thm:BipartiteDifference}
    Let $G$ be a $(c,d)$-biregular graph which is not $K_{2,3}, K_{2,4}, K_{2,5},$ or $K_{3,3}$. Then $\kemeny_e(G) >\kemeny_{nb}(G).$
    
    \end{theorem}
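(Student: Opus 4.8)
The plan is to mimic the strategy of Theorem~\ref{thm:regdiff}: form the difference $\kemeny_e(G) - \kemeny_{nb}(G)$, exploit the fact that $\kemeny_{nb}(G)$ depends linearly on $\kemeny_e(G)$ to isolate a positive multiple of $\kemeny_e(G)$, and then bound $\kemeny_e(G)$ below so that everything reduces to an inequality in the structural parameters $c, d, r, s$.

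First I would subtract the expression for $\kemeny_{nb}(G)$ in Theorem~\ref{thm:biregnbkem} from $\kemeny_e(G)$. Since $\kemeny_e(G)$ appears on the right-hand side only with coefficient $\frac{cd-c-d}{cd}$, collecting terms yields
\[
\kemeny_e(G) - \kemeny_{nb}(G) = \frac{c+d}{cd}\,\kemeny_e(G) - F(c,d,r,s),
\]
where $F$ gathers the remaining purely parametric terms and I use $m = cr = ds$ and $n = r+s$ (with $r \ge s$, hence $c \le d$) to write everything in terms of $c,d,r,s$. The key point is that the coefficient $\frac{c+d}{cd}$ of $\kemeny_e(G)$ is strictly positive.

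Because this coefficient is positive, I would substitute the bound $\kemeny_e(G) \ge 2m - \tfrac32$ from Lemma~\ref{lem:kemebipartitebound}. This produces a lower bound $\Phi(c,d,r,s)$ for $\kemeny_e(G) - \kemeny_{nb}(G)$ that no longer involves the spectrum of $G$. Clearing the positive denominators $cd$, $d$, and $(c-1)(d-1)-1$ turns the inequality $\Phi > 0$ into a polynomial inequality in $c,d,r,s$ constrained by $cr = ds$. Eliminating one variable through this constraint (for instance $r = ds/c$) and comparing leading behaviour as the graph grows shows that the polynomial is positive for all but finitely many small parameter choices; I expect the exceptional tuples to cluster at the smaller degree $c = 2$, where $(c-1)(d-1)-1 = d-2$ shrinks the relevant denominator, which matches the appearance of $K_{2,3}, K_{2,4}, K_{2,5}$ among the exceptions.

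The remaining work, and the main obstacle, is the case analysis for the finitely many parameter tuples where $\Phi \le 0$. Since $m$ is bounded for these tuples, only finitely many biregular graphs realize them, and each can be checked directly by computing $\kemeny_e$ and $\kemeny_{nb}$; this reveals that the inequality $\kemeny_e(G) > \kemeny_{nb}(G)$ fails precisely for $K_{2,3}, K_{2,4}, K_{2,5}$, and $K_{3,3}$. The fact that the exceptions are complete bipartite is to be expected, because equality in Lemma~\ref{lem:kemebipartitebound} holds exactly for complete bipartite graphs, so the substitution $\kemeny_e(G) = 2m - \tfrac32$ is tight precisely there, making them the extremal candidates. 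I anticipate the bookkeeping in this multivariate analysis---managing the constraint $cr = ds$ and the degree asymmetry introduced by the $r-s$ terms---to be considerably more delicate than the single-parameter quadratic in $n$ that arises in the regular case.
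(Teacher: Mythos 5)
Your proposal is correct and takes essentially the same route as the paper's proof: form the difference using Theorem~\ref{thm:biregnbkem}, exploit the positive coefficient $\frac{c+d}{cd}$ on $\kemeny_e(G)$, substitute the lower bound of Lemma~\ref{lem:kemebipartitebound} (valid since that coefficient is positive), reduce to a purely parametric inequality in $c,d,r,s$ via $m=cr=ds$ and $n=r+s$, and dispose of the finitely many exceptional parameter tuples by direct computation, which yields exactly $K_{2,3}, K_{2,4}, K_{2,5}$, and $K_{3,3}$. The only cosmetic difference is that the paper condenses the parametric step into the clean inequality $\frac{1}{(c-1)(d-1)-1}+\frac{1}{c}+\frac{1}{d}\leq 1$ using $r\geq d$ and $s\geq c$, rather than the denominator-clearing and leading-term analysis you outline, but the substance is the same.
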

    \begin{proof}
    From Theorem \ref{thm:biregnbkem} one can compute
    \begin{eqnarray*}
        \kemeny_e(G) - \kemeny_{nb}(G) &=&\: \kemeny_e(G)\left(\frac{c+d}{cd}\right) - \frac{2(m-n+1)(c-1)(d-1)}{(c-1)(d-1)-1} - \frac{1}{2} - 2(s-1)\\
        & & \qquad + (r-s)\left[\frac{-2(d-1)}{d} + \frac{cd-c-d}{cd}\right] + \frac{(2m - n + \frac{1}{2})(cd-c-d)}{cd}\\
        & = & \kemeny_e(G)\left(\frac{c+d}{cd}\right) - 2(m-n+1)\left(1 + \frac{1}{(c-1)(d-1)-1}\right) + \frac{3}{2} - 2r + \frac{2(r-s)}{d}\\
        & & \qquad +\left(1 - \frac{c+d}{cd}\right)(2sd - 2s + \frac{1}{2})\qquad \qquad \text{ (since $m= sd$ and $n = r+s$)}\\
        & \geq& (2m-\frac{3}{2})\left(\frac{c+d}{cd}\right) + \left(1 - \frac{c+d}{cd}\right)(2s(d-1) + \frac{1}{2})\\
        & & \qquad - 2(m-n+1)\left(1 + \frac{1}{(c-1)(d-1)-1}\right) + \frac{2(r-s)}{d} + \frac{3}{2} - 2r\text{ (by Lemma \ref{lem:kemebipartitebound})}\\
        & = & 2\left[\frac{n-m-1}{cd-c-d} + \frac{r}{d} + \frac{s}{c}\right] - \left(\frac{2}{c}+\frac{2}{d}\right).
    \end{eqnarray*}
    
    
    The difference between $m,n$ will be greatest when $G = K_{c,d}$; that is, when $m = cd$ and $n = c+d$. In that case, $n-m-1 = -(c-1)(d-1)$. Also note that $r\geq d$ and $s\geq c$. Then the expression above is greater than or equal to

    \begin{align*}
        2\left[\frac{-(c-1)(d-1)}{cd-c-d} + \frac{r}{d} + \frac{s}{c}\right] - \left(\frac{2}{c}+\frac{2}{d}\right) &= 2\left[-\left(1+\frac{1}{(c-1)(d-1)-1}\right) + \frac{r}{d} + \frac{s}{c}\right] - \left(\frac{2}{c}+\frac{2}{d}\right)\\
        &\geq 2\left[-1 - \frac{1}{(c-1)(d-1)-1} + 1 + 1\right] - \left(\frac{2}{c}+\frac{2}{d}\right)\\
        &= 2 - \frac{2}{(c-1)(d-1) - 1} - \left(\frac{2}{c}+\frac{2}{d}\right).
    \end{align*}
    Note that this expression is nonnegative precisely when
\[\frac{1}{(c-1)(d-1)-1}+\frac{1}{c}+\frac{1}{d}\leq 1.\]
    An easy check shows that if $c = 2$ then this inequality holds for $d\geq 6$.
    
    If $c\geq 3$, then this will hold so long as $d\geq 3$, with equality when $d = 3$. Notice that for $G = K_{2,3}, K_{2,4}, K_{2,5}$, we have $\kemeny_e(G) < \kemeny_{nb}(G)$, and if $G = K_{3,3}$ then $\kemeny_e(G) = \kemeny_{nb}(G)$. These can be shown to be the only $(c,d)$-biregular graphs in which $\kemeny_e(G) < \kemeny_{nb}(G)$.
    
    \end{proof}

    \begin{theorem}
    Let $G$ be a $(c,d)$-biregular graph which is not $K_{2,3}, K_{2,4}, K_{2,5},$ or $K_{3,3}$. Then
    \[
    1 - \frac{c+d}{cd} \leq \frac{\kemeny_{nb}(G)}{\kemeny_e(G)} < 1.
    \]
    \end{theorem}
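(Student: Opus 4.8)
The plan is to follow the template of the regular case in Theorem \ref{thm:kemratio}. Starting from the closed form for $\kemeny_{nb}(G)$ in Theorem \ref{thm:biregnbkem}, I divide through by $\kemeny_e(G)$ and read off the coefficient of $\kemeny_e(G)$, which is exactly $\frac{cd-c-d}{cd} = 1 - \frac{c+d}{cd}$. This yields
\[
\frac{\kemeny_{nb}(G)}{\kemeny_e(G)} = 1 - \frac{c+d}{cd} + \frac{N}{\kemeny_e(G)},
\]
where $N$ gathers every term of $\kemeny_{nb}(G)$ that does not multiply $\kemeny_e(G)$, namely
\[
N = \frac{2(m-n+1)(c-1)(d-1)}{(c-1)(d-1)-1} + \frac{2(r-s)(d-1)}{d} + \tfrac12 + 2(s-1) + \frac{cd-c-d}{cd}\left(-2m+n-\tfrac12-(r-s)\right).
\]
Since $\kemeny_e(G)>0$, the lower bound is equivalent to showing $N \ge 0$. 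The upper bound requires nothing new: Theorem \ref{thm:BipartiteDifference} already gives $\kemeny_e(G) > \kemeny_{nb}(G)$ for every $(c,d)$-biregular graph other than the four exceptions, so the ratio is strictly below $1$.

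The substantive work is the simplification of $N$, for which I would use the biregularity identities $m = sd = rc$ and $n = r+s$, together with the observation that $cd-c-d = (c-1)(d-1)-1$. Writing $q := (c-1)(d-1)$ (so $q-1 = cd-c-d$), the relation $rc = sd$ gives $m-n+1 = \frac{s(q-1)+c}{c}$, which collapses the first term of $N$ to $\frac{2qs}{c} + \frac{2q}{q-1}$. Substituting $m = sd$ into the final bracket turns $-2m+n-\tfrac12-(r-s)$ into $-2s(d-1)-\tfrac12$. After eliminating $r$ via $r = sd/c$, the quantity $N$ becomes linear in $s$, and I would collect the coefficient of $s$ and the constant term separately. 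The coefficient of $s$ reduces, after clearing $cd$, to $\frac{2(c+d-1)}{c}$, and the constant term reduces to $\frac{2}{q-1} + \frac{c+d}{2cd}$; both are manifestly positive whenever the non-backtracking walk is defined, i.e.\ $q>1$. Hence $N>0$ and the lower bound holds, approached in the limit $|V(G)|\to\infty$ just as the necklace graphs approached the regular lower bound.

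The main obstacle is the bookkeeping in the two cancellations. For the $s$-coefficient I must verify that the bracket $qd + (d-c)(d-1) + cd - (q-1)(d-1)$ collapses to $d(c+d-1)$, which happens because the $cd^2$ and $c$ contributions cancel. For the constant term I would first write $\frac{2q}{q-1} = 2 + \frac{2}{q-1}$ so that the integer pieces $2 + \tfrac12 - 2$ combine to $\tfrac12$, then substitute $\frac{q-1}{2cd} = \tfrac12 - \frac{c+d}{2cd}$ so that the two halves cancel and leave the strictly positive remainder $\frac{2}{q-1} + \frac{c+d}{2cd}$. Organizing the constant term in this order is what makes the positivity transparent; once $N>0$ is established the theorem follows immediately, with the upper bound cited directly from Theorem \ref{thm:BipartiteDifference}.
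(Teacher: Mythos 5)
Your proof is correct and follows essentially the same route as the paper's: the upper bound is quoted from Theorem \ref{thm:BipartiteDifference}, and the lower bound is reduced, exactly as in the paper, to showing that $N=\kemeny_e(G)\bigl(\tfrac{\kemeny_{nb}(G)}{\kemeny_e(G)}-1+\tfrac{c+d}{cd}\bigr)$ is nonnegative, using the same biregularity identities $m=sd=cr$ and $n=r+s$. The only divergence is in the endgame: where the paper bounds $N$ below via the inequalities $s\bigl(4-\tfrac2c-\tfrac2s\bigr)\ge 2s$ and $m\ge n-1$, you simplify $N$ exactly to $\tfrac{2(c+d-1)}{c}\,s+\tfrac{2}{(c-1)(d-1)-1}+\tfrac{c+d}{2cd}$ (which I have verified is correct), a marginally sharper finish that makes positivity immediate.
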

    \begin{proof}
    First note that the upper bound is a restatement of Theorem \ref{thm:BipartiteDifference}.
    
    To prove the lower bound, the following substitutions will be useful.
    Now consider
    \begin{align*}
        \frac{\kemeny_{nb}(G)}{\kemeny_e(G)} &= \frac{2(m-n+1)(c-1)(d-1)}{\kemeny_e(G)[(c-1)(d-1)-1]} + \frac{2(r-s)(d-1)}{\kemeny_e(G)d} + \frac{1}{2\kemeny_e(G)} + \frac{2(s-1)}{\kemeny_e(G)} + \frac{cd-c-d}{cd}\\
        &\qquad + \frac{cd-c-d}{\kemeny_e(G)cd}[-2m+n-\frac{1}{2}-r+s]\\
        &= \frac{2(m-n+1)(c-1)(d-1)}{\kemeny_e(G)[(c-1)(d-1)-1]} + \frac{2(r-s)(d-1)}{\kemeny_e(G)d} + \frac{1}{2\kemeny_e(G)} + \frac{2(s-1)}{\kemeny_e(G)} + \frac{cd-c-d}{cd}\\
        &\qquad - \frac{cd-c-d}{\kemeny_e(G)cd}[2s(d-1)+\frac{1}{2}]
    \end{align*}
    Notice that $\kemeny_e(G)>0$. Then the lower bound holds if and only if
    \[
\kemeny_e(G)\left(\frac{\kemeny_{nb}(G)}{\kemeny_e(G)} -  1+\frac{c+d}{cd}\right)\geq 0.
    \]
    Considering this expression, we can rewrite it as
    \begin{eqnarray}
    \kemeny_e(G)\left(\frac{\kemeny_{nb}(G)}{\kemeny_e(G)} -  1+\frac{c+d}{cd}\right) & = & \frac{2(m-n+1)(c-1)(d-1)}{(c-1)(d-1)-1} + \frac{2(r-s)(d-1)}{d} \notag\\
    & & \qquad  +\frac{1}{2} + 2(s-1) - \frac{cd-c-d}{cd}\left(2s(d-1)+\frac12\right)\notag\\
    & = & \frac{2(m-n+1)(c-1)(d-1)}{(c-1)(d-1)-1} + \frac{2(d-1)r}{d} +\frac{c+d}{2cd}\notag\\
    & & \qquad  + s\left(4-\frac{2}{c}-\frac{2}{s}\right) - 2m + 2r ~~ \text{ (since $m = cr$.)} \label{eq:big_exp}
    \end{eqnarray}
    Here, notice that $4-\tfrac{2}{c}-\tfrac{2}{s} \geq 2$ since both $c\geq 2$ and $s\geq2$. Then we get $
    s\left(4 - \frac2c - \frac2s\right) \geq 2s.$
    Recall also that $n = r + s$. Applying these observations to \eqref{eq:big_exp}, we now have 
    \begin{eqnarray*}
    \kemeny_e(G)\left(\frac{\kemeny_{nb}(G)}{\kemeny_e(G)} -  1+\frac{c+d}{cd}\right) & \geq & \frac{2(m-n+1)(c-1)(d-1)}{(c-1)(d-1)-1} + \frac{2(d-1)r}{d} + \frac{c+d}{2cd} + 2n - 2m\\
    & = & \frac{2(m+1)(c-1)(d-1)}{(c-1)(d-1)-1} - \frac{2n}{(c-1)(d-1)-1} + \frac{2(d-1)r}{d} + \frac{c+d}{2cd} - 2m\\
    & = & 2(m+1) + \frac{2(m+1) - 2n}{(c-1)(d-1)-1} +\frac{2(d-1)r}{d} +\frac{c+d}{cd} - 2m\\
    & = &  2 + \frac{2(m-n+1)}{(c-1)(d-1)-1} + \frac{2(d-1)r}{d} + \frac{c+d}{2cd}.
    \end{eqnarray*}
    This expression is positive, and so it holds that
    \begin{equation*}
        \frac{\kemeny_{nb}(G)}{\kemeny_e(G)} \geq 1 - \frac{c+d}{cd}.
    \end{equation*}
    \end{proof}
      Note that when $c = d$ this is the same lower bound for the ratio obtained in the regular graph case above, Theorem \ref{thm:kemratio}.
    
\section{Cycle Barbells}
As these expressions for Kemeny's constant in the edge space depend on both the number of vertices and the number of edges, the question arises, ``What comparisons are meaningful comparisons?". One method for ensuring meaningful comparisons of edge Kemeny's constant between graphs is to compare only graphs that have the same number of both vertices and edges. A natural starting place for where the non-backtracking Kemeny's constant will be defined is the family of graphs with $n$ vertices and $n+1$ edges.

Using SageMath, we compute the values of the edge and non-backtracking Kemeny's constants for all graphs with minimum degree 2 on $n$ vertices and $n+1$ edges, up to order $n=20$.  These computations suggest that, for both $\kemeny_{nb}$ and $\kemeny_e$, the largest Kemeny's constant with these constraints occurs for graphs that we will call ``cycle barbells." 

We give a definition of cycle barbells and then proceed to calculate $\kemeny_{nb}$ and $\kemeny_e$ for these graphs.

\begin{definition}
    The \emph{cycle barbell} $G = CB(k,a,b) = C_a \oplus P_k \oplus C_b$ is the 1-sum of an
    $a$-cycle, a path on $k$ vertices, and a $b$-cycle. Note $|V(G)| = a+b+k-2$ and
    $|E(G)| = a+b+k-1$.
\end{definition}

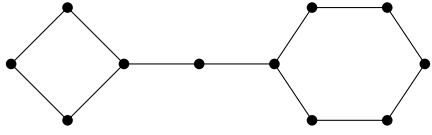
\begin{figure}[H]
    \centering
    \begin{tikzpicture}
    \tikzstyle{every node}=[circle,  fill=black, minimum width=4pt, inner sep=1pt]
        \draw{
        (-.5,.5)node{}--(0.25,1.25)node{}--(1,0.5)node{}--(0.25,-.250)node{}--(-.5,.5)
        (1,0.5)--(2,0.5)node{}--(3,0.5)node{}
        (3,0.5)--(3.5,1.25)node{}--(4.5,1.25)node{}--(5,0.5)node{}--(4.5,-.250)node{}--(3.5,-.250)node{}--(3,0.5)
        };
    \end{tikzpicture}
    \caption{The graph CB(3,4,6).}
    \label{fig:cb_graph}
\end{figure}


\begin{theorem}\label{thm:VkemCB}
For a cycle barbell $G = CB(k,a,b)$, the vertex Kemeny's constant is given by
  \begin{align*}
  \kemeny_v(G) &=\frac{1}{a+b+k-1}\cdot \left[\frac{(a+1)(a-1)}{6}(a+2(b+k-1)) + \frac{(b+1)(b-1)}{6}(b+2(a+k-1))\right.\\
  &\left.+ (a+b)(k-1)^2+\frac{(k-1)(2k^2-4k+3)}{6}+2ab(k-1)\right].
  \end{align*}
  
\end{theorem}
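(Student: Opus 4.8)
The plan is to exploit the two cut vertices of $CB(k,a,b)$ and apply the $1$-separation formula of Lemma~\ref{thm:kem1sepformula} twice. Writing $G = C_a \oplus_u G'$ with $G' = P_k \oplus_w C_b$, where $u$ and $w$ are the two endpoints of the path (identified with a vertex of $C_a$ and a vertex of $C_b$ respectively), I would first compute $\kemeny_v(G')$ and then feed it into a second application of the formula to combine with $C_a$. The common denominator $m = a+b+k-1$ appearing in the claimed expression strongly suggests this is the right route, since it is exactly the $m_1+m_2$ of the outer application.

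The first step is to assemble the building-block data. Using the effective-resistance expression $\kemeny_v = \tfrac{1}{4m} d^T R d$ from Lemma~\ref{thm:kemeny}, together with $r(i,j) = \tfrac{\ell(n-\ell)}{n}$ on a cycle (where $\ell$ is the shorter arc-distance) and $r(i,j)=|i-j|$ on a path, I would establish $\kemeny_v(C_n) = \tfrac{n^2-1}{6}$ and $\kemeny_v(P_k)=\tfrac{2k^2-4k+3}{6}$; note the latter, multiplied by $m_1 = k-1$, already produces the isolated path term $\tfrac{(k-1)(2k^2-4k+3)}{6}$ in the target formula. From Definition~\ref{def:moment} the symmetric cycle moment is $\mu(C_n,v)=\tfrac{n^2-1}{3}$ for every vertex $v$, and the path-endpoint moment is $\mu(P_k,p_1)=\mu(P_k,p_k)=(k-1)^2$; both follow from the same resistance sums used above.

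The iteration forces me to evaluate a moment in the \emph{intermediate} graph $G'$, namely $\mu(G',u)$, and this is the main obstacle: $u$ lies in the path-block while the degree bookkeeping and resistance additivity must account for the cut vertex $w$ (which has degree $3$ in $G'$) and the entire block $C_b$. I would handle this with a moment-additivity identity, which I would prove separately: for $G=G_1\oplus_v G_2$ and $x\in V(G_1)$,
\[
\mu(G,x) = \mu(G_1,x) + 2m_2\, r_{G_1}(x,v) + \mu(G_2,v),
\]
obtained by splitting the defining sum $\sum_i d_i\, r(i,x)$ according to whether a vertex lies in $G_1$ or $G_2$, using that every resistance from $G_2$ to $x$ passes additively through $v$ and that $\sum_{i\in G_2} d_i = 2m_2$. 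Applied with $G_1=P_k$, $G_2=C_b$, $v=w$, $x=u$, this gives $\mu(G',u) = (k-1)^2 + 2b(k-1) + \tfrac{b^2-1}{3}$.

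Finally I would substitute everything into Lemma~\ref{thm:kem1sepformula}: the inner application (with $m_1=k-1$, $m_2=b$) yields $\kemeny_v(G')$, and the outer application (with $m_1=a$, $m_2=b+k-1$, $\mu(C_a,u)=\tfrac{a^2-1}{3}$, and $\mu(G',u)$ as above) yields $\kemeny_v(G)$. Collecting the numerator over $m=a+b+k-1$, the terms proportional to $a^2-1$ consolidate into $\tfrac{a^2-1}{6}\bigl(a+2(b+k-1)\bigr)$ and those proportional to $b^2-1$ into $\tfrac{b^2-1}{6}\bigl(b+2(a+k-1)\bigr)$, while the remaining path-and-cross contributions combine into $(a+b)(k-1)^2 + 2ab(k-1) + \tfrac{(k-1)(2k^2-4k+3)}{6}$, matching the claimed formula. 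The residual work is purely algebraic bookkeeping; I would organize it around the additivity identity rather than expanding resistances vertex-by-vertex, since the degree-$3$ cut vertices and the cross-block resistance terms are where errors are most likely to creep in.
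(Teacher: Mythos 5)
Your proposal is correct and follows essentially the same route as the paper's proof: iterated application of the $1$-separation formula (Lemma~\ref{thm:kem1sepformula}) using the standard resistance distances in cycles and paths, with the block Kemeny's constants and moments computed via Lemma~\ref{thm:kemeny} and Definition~\ref{def:moment}. The explicit moment-additivity identity $\mu(G,x) = \mu(G_1,x) + 2m_2\, r_{G_1}(x,v) + \mu(G_2,v)$ is a clean way of organizing the intermediate moment computation that the paper leaves implicit, and all of your block values and the final consolidation of terms check out against the stated formula.
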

\begin{proof}
Since the cycle barbell is a $1$-sum of two cycles and a path, we can use  Lemma \ref{thm:kem1sepformula} to give the result, relying on known expressions for the resistance distances in paths and cycles. In particular, using methods from Chapter 10 of \cite{bapat2010graphs}, it can be shown that in a cycle $C_n$ the resistance distance between two vertices $i,j$ is $r_{C_n}(i,j) = \frac{d(i,j)(n-d(i,j))}{n}$, where $d(i,j)$ is the shortest path distance from $i$ to $j$. In addition, for trees, $r(i,j) = d(i,j)$. From here the vertex Kemeny's constant and moment expressions are easily calculated using Lemma \ref{thm:kemeny} and Definition \ref{def:moment}, and combined using Lemma \ref{thm:kem1sepformula} to give the result. 
\end{proof}

The edge Kemeny's constant for a cycle barbell follows easily from Theorem \ref{thm:VkemCB} and Theorem \ref{thm:kemSRWedge}.

\begin{corollary}
For a cycle barbell $G = CB(k,a,b)$, the edge Kemeny's constant is given by
  \begin{align*}
  \kemeny_e(G) &=\frac{1}{a+b+k-1}\cdot \left[\frac{(a+1)(a-1)}{6}(a+2(b+k-1)) + \frac{(b+1)(b-1)}{6}(b+2(a+k-1))\right.\\
  &\left.+ (a+b)(k-1)^2+\frac{(k-1)(2k^2-4k+3)}{6}+2ab(k-1)\right]+a+b+k.
  \end{align*}
\end{corollary}

In order to find the non-backtracking Kemeny's constant for the cycle barbells we will find the characteristic polynomial of the non-backtracking transition probability matrix, and apply  Lemma \ref{lem:KemenyFromCharPoly} to calculate $\kemeny_{nb}(G)$.
For $G = CB(k,a,b)$, this matrix is given by 
\[P_{nb}(G) = 
\begin{bmatrix}
    \widehat C_a & 0 & 0 & 0 & 0 & \frac12S_a\\
    0 & \widehat C_a & 0 & 0 & 0 & \frac12S_a\\
    0 & 0 & \widehat C_b & 0 & \frac12S_b & 0\\
    0 & 0 & 0 & \widehat C_b & \frac12S_b & 0\\
    \frac12R_a & \frac12R_a & 0 & 0 & J_{k-1}(0) & 0\\
    0 & 0 & \frac12R_b & \frac12R_b & 0 & J_{k-1}(0)
\end{bmatrix}
\]
where $S_a$ is the $a\times (k-1)$ matrix that is all 0's except for a 1 in the bottom left entry, $R_a$ is the $(k-1)\times a$ matrix that is all 0's except for a 1 in the bottom left entry, $J_{k-1}(0)$ is a $(k-1)\times (k-1)$ Jordan block with 0 on the diagonal, and $\widehat{C}_a$ is an $a\times a$ matrix with 1's on the super diagonal, $1/2$ in the bottom left entry, and 0's everywhere else.  

 
\begin{lemma}\label{lem:CBdiffsize-NBRW-charpoly}
Let $G = CB(k,a,b)$. Then $P_{nb}(G)$ has characteristic polynomial
\[
p(t) = (2t^a-1)(2t^b-1)[(2t^a-1)(2t^b-1)t^{2(k-1)}-1].
\]
\end{lemma}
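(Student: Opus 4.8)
The plan is to compute the characteristic polynomial $p(t) = \det(tI - P_{nb}(G))$ directly from the block structure of $P_{nb}(G)$ given above, exploiting the near-triangular, cyclic shape of the blocks. The matrix has six block rows/columns: four "cycle" blocks $\widehat C_a, \widehat C_a, \widehat C_b, \widehat C_b$ (capturing the two directional walks around each of the two cycles), and two "path" blocks $J_{k-1}(0)$ (capturing the non-backtracking walk along the bridging path in each direction). First I would record the two elementary facts that drive everything: (i) $\det(tI - \widehat C_a) = t^a - \tfrac12$ up to a constant — since $\widehat C_a$ is a weighted cyclic permutation with superdiagonal 1's and a $1/2$ in the bottom-left corner, its characteristic polynomial is $t^a - 1/2$, so $\det(2(tI-\widehat C_a)) = 2t^a - 1$ after clearing the factor; and (ii) $\det(tI - J_{k-1}(0)) = t^{k-1}$, since the nilpotent Jordan block contributes only the diagonal. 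These account for the factored form $(2t^a-1)(2t^b-1)$ and the power $t^{2(k-1)}$ appearing in the claim.

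The main computation is to handle the coupling between the cycle blocks and the path blocks, encoded by the off-diagonal $\tfrac12 S_a, \tfrac12 S_b, \tfrac12 R_a, \tfrac12 R_b$ matrices, each of which has a single nonzero entry. The natural tool is the Schur complement: I would partition $tI - P_{nb}$ into the four cycle blocks (forming an invertible diagonal block, since the $\widehat C$-blocks do not interact with each other) and the two path blocks, and write
\[
\det(tI - P_{nb}) = \det(\text{cycle part})\cdot \det\bigl(\text{path part} - (\text{coupling})(\text{cycle part})^{-1}(\text{coupling})\bigr).
\]
Because each coupling block is rank one (a single $1$ in a corner), the correction term feeding into the path Schur complement will be a rank-small perturbation of $tI - J_{k-1}(0)$, and computing its determinant reduces to evaluating a specific entry of $(tI - \widehat C_a)^{-1}$ and $(tI - \widehat C_b)^{-1}$ — namely the entry linking the single "active" column of $S$ to the single "active" row of $R$. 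For the cyclic matrix $\widehat C_a$, that particular inverse entry has a clean closed form (a ratio like $\tfrac{1}{2}\big/(t^a - \tfrac12)$, owing to the cyclic structure and the positions of the corner entries), which is exactly what produces the factor $t^{2(k-1)}$ multiplied against $(2t^a-1)(2t^b-1)$ and the final $-1$.

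I expect the main obstacle to be the bookkeeping of these corner entries: tracking precisely which positions of $S_a, R_a$ (bottom-left corners) connect to which coordinates of the Jordan block, and then correctly identifying the relevant entry of the cyclic inverse $(tI-\widehat C_a)^{-1}$. The two directions around each cycle (the two identical $\widehat C_a$ blocks sharing the same $\tfrac12 S_a$ coupling) mean the rank-one contributions add, and getting the factor of $2$ right — so that $2t^a - 1$ rather than $t^a - \tfrac12$ emerges — requires care. Once the Schur complement is assembled, the remaining step is purely algebraic: the path Schur determinant will have the form $t^{2(k-1)} - (\text{coupling correction})$, and substituting the closed-form inverse entries and clearing denominators should collapse it to $(2t^a-1)(2t^b-1)t^{2(k-1)} - 1$, with the overall prefactor $(2t^a-1)(2t^b-1)$ coming from the determinant of the cycle part. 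A sanity check on small cases (e.g. matching degrees: $p(t)$ should have degree $2a + 2b + 2(k-1) = 2m$, which it does) would confirm the computation before finalizing.
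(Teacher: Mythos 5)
Your proposal is correct, and it takes a genuinely different route from the paper. The paper's proof works by exhibiting eigenvectors: for each root $\lambda$ of $2t^a-1$, of $2t^b-1$, and of $(2t^a-1)(2t^b-1)t^{2(k-1)}-1$, it writes down an explicit eigenvector of $P_{nb}$ (supported on the two copies of the relevant cycle with opposite signs in the first two cases, and spread over all six blocks in the third), and then asserts that these form a complete linearly independent set. Your approach instead computes $\det(tI-P_{nb})$ directly by a nested Schur complement, and the computation does go through as you sketch it: the cycle part contributes $(t^a-\tfrac12)^2(t^b-\tfrac12)^2$; the coupling terms $C A^{-1}B$ vanish on the diagonal path blocks and produce \emph{off-diagonal} rank-one blocks $\tfrac12\,[(tI-\widehat C_a)^{-1}]_{1,a}\,e_{k-1}e_1^T$ (the doubling $2\cdot\tfrac14=\tfrac12$ from the two copies of each cycle, and $[(tI-\widehat C_a)^{-1}]_{1,a}=1/(t^a-\tfrac12)$ by a cofactor computation), so the coupling links the two path directions to each other through the cycles; a second Schur complement inside the path part then yields $t^{2(k-1)}-\tfrac{1}{(2t^a-1)(2t^b-1)}$, and clearing the denominator consumes one copy of $(2t^a-1)(2t^b-1)$ from the squared cycle determinant, leaving exactly the stated $p(t)$ (up to the scalar $\tfrac{1}{16}$ --- note the paper's $p(t)$ has leading coefficient $16$, so it is the characteristic polynomial only up to normalization; this is harmless for Lemma \ref{lem:KemenyFromCharPoly} since $-c_2/c_1$ is scale-invariant, but your derivation makes it explicit). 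The trade-off: your determinant computation is self-contained and handles multiplicities automatically, avoiding the completeness-of-eigenvectors verification that the paper leaves as ``easily verified'' --- which is really the crux of its argument; the paper's approach, in exchange, is shorter and produces explicit eigenvectors, which carry more structural information. Two small points to tighten in a full write-up: the Schur complement step requires $t^a\neq\tfrac12$, $t^b\neq\tfrac12$, $t\neq 0$, so you should conclude by noting that two polynomials agreeing off a finite set agree identically; and the prefactor $(2t^a-1)(2t^b-1)$ comes from the cycle determinant \emph{after} partial cancellation against the cleared denominator, not directly.
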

\begin{proof}
  

We will determine eigenvectors of $P_{nb}$.  Suppose $2\lambda^a - 1 = 0$. Let $x = [\lambda\,\, \lambda^2\, \cdots\, \lambda^a]^T$. Then computation reveals that $[x^T\, -x^T\, 0\,\, 0\,\, 0]^T$ is an eigenvector for $P_{nb}$ corresponding to $\lambda$.

Suppose $2\lambda^b - 1 = 0$. A similar construction of $x$ will give $[0\,\, 0\,\, x^T\, -x^T\, 0]^T$ as an eigenvector for $P_{nb}$.

Suppose that $\lambda$ is a solution to $(2t^a-1)(2t^b-1)t^{2(k-1)}-1 = 0$.
Let $x, y, f, g, \alpha, \beta$ be as follows.
\begin{align*}
    x &= [\lambda^{k-1} \,\hdots\, \lambda^{a+k-2}]^T & y&=\beta[\lambda^k\,\hdots\,\lambda^{b+k-1}]^T\\
    f &= [1 \,\lambda\,\hdots\,\lambda^{k-2}]^T & g &= \alpha[\lambda^k\, \hdots\, \lambda^{2(k-1)}]^T\\
    \alpha &= \frac{2\lambda^a-1}{\lambda} & \beta &= \frac{1}{2\lambda^{b+k}-\lambda^k}.
\end{align*}
Note that $\beta$ is well-defined since if $2\lambda^{b+k} - \lambda^k = 0$ then $\lambda$ cannot be a root of $(2t^a-1)(2t^b-1)t^{2(k-1)}-1.$

Computation reveals that
\[
P_{nb}\begin{bmatrix}
    x\\
    x\\
    y\\
    y\\
    f\\
    g
\end{bmatrix} = \lambda\begin{bmatrix}
    x\\
    x\\
    y\\
    y\\
    f\\
    g
\end{bmatrix}.
\]
It is easily verified that this forms a complete set of linearly independent eigenvectors.
\end{proof}

\begin{theorem}\label{thm:kemNBRWBarbell}
The non-backtracking Kemeny's constant for a cycle barbell $G = CB(k,a,b)$ is given by
\[
\kemeny_{nb}(G) = \frac{2(a+b+k-1)^2+3(a+b)^2+2ab+4(a+b)(k-1)-(a+b+k-1)}{2(a+b+k-1)}.
\]
\end{theorem}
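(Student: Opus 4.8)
The plan is to apply Lemma~\ref{lem:KemenyFromCharPoly} to the characteristic polynomial $p(t)$ from Lemma~\ref{lem:CBdiffsize-NBRW-charpoly}, which gives $\kemeny_{nb}(G) = -c_2/c_1$ where $p(1-x) = \cdots + c_2 x^2 + c_1 x$. The first step is to substitute $t = 1-x$ into
\[
p(t) = (2t^a-1)(2t^b-1)\left[(2t^a-1)(2t^b-1)t^{2(k-1)}-1\right]
\]
and extract the coefficients of $x$ and $x^2$. Since $p(1) = (2-1)(2-1)[(2-1)(2-1)\cdot 1 - 1] = 0$, the constant term vanishes as expected (confirming $t=1$ is a root, corresponding to the stationary eigenvalue $\rho_1 = 1$). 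To find $c_1$ and $c_2$, I would not expand the whole polynomial but instead compute the Taylor expansion of each factor about $t=1$ to second order, i.e. write $2t^a - 1 = 1 + 2a(t-1) + a(a-1)(t-1)^2 + O((t-1)^3)$ and similarly for the $b$-factor and for $t^{2(k-1)}$, then multiply the local expansions together.

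The cleanest route is to introduce the abbreviations $u = 2t^a-1$ and $v = 2t^b-1$ and write $p = uv(uv\,t^{2(k-1)} - 1)$. Setting $t = 1+\epsilon$ (so $\epsilon = -x$), I would expand $u = 1 + 2a\epsilon + a(a-1)\epsilon^2 + \cdots$, $v = 1 + 2b\epsilon + b(b-1)\epsilon^2 + \cdots$, and $t^{2(k-1)} = 1 + 2(k-1)\epsilon + (k-1)(2k-3)\epsilon^2 + \cdots$. Multiplying $u$ and $v$ gives $uv = 1 + 2(a+b)\epsilon + [a(a-1)+b(b-1)+4ab]\epsilon^2 + \cdots$, and then $uv\,t^{2(k-1)} - 1$ is a series beginning at order $\epsilon^1$; call its linear coefficient $L_1$ and quadratic coefficient $L_2$. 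Finally $p = uv \cdot (uv\,t^{2(k-1)}-1)$ has its linear term equal to $L_1$ (since $uv \to 1$ at $\epsilon = 0$) and its quadratic term equal to $L_2 + 2(a+b)L_1$. Converting from $\epsilon$ to $x = -\epsilon$ flips the sign on the linear coefficient but not the quadratic, so $\kemeny_{nb}(G) = -c_2/c_1 = (L_2 + 2(a+b)L_1)/L_1$ up to a sign I would track carefully.

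I expect the main obstacle to be purely the bookkeeping in this quadratic expansion: collecting all cross terms correctly, especially the interaction between the $4ab\epsilon^2$ piece of $uv$ and the $t^{2(k-1)}$ factor, and then algebraically massaging the resulting ratio into the stated closed form. The denominator $2(a+b+k-1)$ strongly suggests $c_1 = L_1$ is proportional to $a+b+k-1 = |V(G)|$ (plausibly $L_1 = 2(a+b+k-1)$ after accounting for the leading behavior), which serves as a useful sanity check. Once both coefficients are in hand, the final simplification to
\[
\kemeny_{nb}(G) = \frac{2(a+b+k-1)^2 + 3(a+b)^2 + 2ab + 4(a+b)(k-1) - (a+b+k-1)}{2(a+b+k-1)}
\]
is a matter of matching polynomial expressions in $a,b,k$; I would verify it against a small case such as $CB(3,4,6)$ by direct computation to guard against sign or combinatorial errors in the Taylor coefficients.
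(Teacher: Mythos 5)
Your proposal is correct and takes essentially the same approach as the paper, whose proof consists exactly of ``computation using Lemma~\ref{lem:CBdiffsize-NBRW-charpoly} and Lemma~\ref{lem:KemenyFromCharPoly}''; your Taylor expansion about $t=1$ is a valid way to carry out that computation, and your coefficients check out (one finds $L_1 = 2(a+b+k-1)$, and $L_2 + 2(a+b)L_1$ simplifies to the stated numerator, with the sign working out to $-c_2/c_1 = (L_2+2(a+b)L_1)/L_1$ since $c_1 = -L_1$).
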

\begin{proof}
This follows from computation using Lemma \ref{lem:CBdiffsize-NBRW-charpoly} and Lemma \ref{lem:KemenyFromCharPoly}.

\end{proof}

In these next results we show which barbells are the maximizers for the variants of Kemeny's constant among barbells on $n$ vertices. It is especially interesting that the non-backtracking Kemeny's constant has a different maximizer than the edge Kemeny's constant.

\begin{theorem}\label{thm:maxEdgeBarbell}
The edge Kemeny's constant for a cycle barbell on a fixed number of vertices is maximized when $a=b=3$, and the path has all the remaining vertices; that is, the extremal graph is $CB(n-4,3,3)$.  Moreover, \[\kemeny_e(CB(n-4,3,3))=\frac{2n^3+12n^2-51n+101}{6(n+1)}.\] 
\end{theorem}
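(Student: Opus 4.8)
The plan is to collapse the two–cycle/one–path optimization into an essentially explicit one–variable problem. First I would record that every cycle barbell on $n$ vertices has exactly $m = a+b+k-1 = n+1$ edges, so by Theorem \ref{thm:kemSRWedge} we have $\kemeny_e(G) = \kemeny_v(G) + (2m-n) = \kemeny_v(G) + (n+2)$. Since $n$ is fixed, maximizing $\kemeny_e$ over all cycle barbells on $n$ vertices is the same as maximizing $\kemeny_v$; and because the common denominator $a+b+k-1 = n+1$ in Theorem \ref{thm:VkemCB} is constant, this in turn is the same as maximizing the bracketed numerator $B(a,b,k)$ subject to $a+b+k = n+2$, $a,b\ge 3$, $k\ge 2$. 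Substituting $k = n+2-a-b$ turns $B$ into a cubic polynomial $F(a,b)$ that is symmetric in $a$ and $b$.

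The heart of the argument is to fix $b$ and study $g(a) := F(a,b)$ as a cubic in $a$. The key computation, and the step I expect to carry the weight of the proof, is that after the substitution $k = n+2-a-b$ the linear-in-$a$ term of $g$ cancels completely, while the quadratic coefficient turns out to be independent of $b$, leaving
\[ g(a) = \tfrac12\,a^3 - \tfrac{2(n+1)}{3}\,a^2 + c(b), \]
where $c(b)$ does not depend on $a$. The vanishing of the linear term is a genuine cancellation among the five summands of $B$ (the two cycle terms, the cross terms $(a+b)(k-1)^2$ and $2ab(k-1)$, and the path term $\tfrac16(k-1)(2k^2-4k+3)$). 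Verifying this cancellation is the main obstacle; once it is in hand the rest is elementary.

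Granting this form, I would compare $g(3)$ with $g(a)$ for any larger feasible $a$. Because both the linear term and the constant $c(b)$ drop out of the difference, and $a^3-27=(a-3)(a^2+3a+9)$, $a^2-9=(a-3)(a+3)$, one obtains the clean factorization
\[ g(3)-g(a) = (a-3)\,H(a), \qquad H(a):=\tfrac{2(n+1)}{3}(a+3)-\tfrac12(a^2+3a+9). \]
Here $H$ is a concave quadratic, hence positive on an interval as soon as it is positive at both endpoints. The feasible range (with $b$ fixed and $k\ge 2$) is $3\le a\le n-b\le n-3$, and direct evaluation gives $H(3)=4(n+1)-\tfrac{27}{2}>0$ and $H(n-3)=\tfrac16(n^2+13n-27)>0$. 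By concavity $H>0$ on all of $[3,n-b]$, so $g(3)>g(a)$ for every feasible $a>3$. Thus for each fixed $b$ the cycle length $a=3$ is optimal; applying the same statement with $a$ and $b$ interchanged (using symmetry of $F$) forces $a=b=3$, so $G=CB(n-4,3,3)$ is the unique maximizer.

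Finally I would substitute $a=b=3$, $k=n-4$ into the edge-Kemeny formula of the preceding Corollary and simplify; the claimed value $\kemeny_e(CB(n-4,3,3))=\frac{2n^3+12n^2-51n+101}{6(n+1)}$ follows from this routine computation.
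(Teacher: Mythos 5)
Your proposal is correct, and it reaches the extremal graph by a genuinely different route than the paper. Both arguments open the same way---since $2m-n=n+2$ is constant over cycle barbells on $n$ vertices, it suffices to maximize $\kemeny_v$, i.e.\ the bracketed numerator in Theorem \ref{thm:VkemCB}---but the paper keeps $k$ as an explicit variable: for fixed $k$ (hence fixed $R=a+b$) it reduces to a quadratic in $b$ with critical point $b=R/2$, runs a case analysis on the sign of $R-8(k-1)$ using the benchmark value $2R^2$ to conclude the within-slice optimum is at $a=b$, and then shows the resulting cubic in $k$ is strictly increasing, forcing $k=n-4$ and $a=b=3$. You instead eliminate $k$ via $k=n+2-a-b$ and exploit the symmetry of the resulting $F(a,b)$. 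Your pivotal cancellation claim does check out: writing $N=n+1$, the $a$-cycle term becomes $\tfrac{1}{6}(a^2-1)(2N-a)$, the $b$-cycle term becomes $\tfrac{1}{6}(b^2-1)(2N-b)$ (hence independent of $a$), and collecting the contributions of $(a+b)(N-a-b)^2$, the path term, and $2ab(N-a-b)$ gives $a^3$-coefficient $1-\tfrac{1}{6}-\tfrac{1}{3}=\tfrac{1}{2}$, $a^2$-coefficient $\tfrac{N}{3}-2N+N+(3b-b-2b)=-\tfrac{2N}{3}$, and vanishing linear coefficient; the factorization $g(3)-g(a)=(a-3)H(a)$, the concavity of $H$, and the endpoint values $H(3)=4(n+1)-\tfrac{27}{2}>0$, $H(n-3)=\tfrac{1}{6}(n^2+13n-27)>0$ then make $a=3$ strictly best for each fixed $b$, and symmetry legitimately upgrades this to uniqueness of the maximizer $(3,3)$. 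As for what each approach buys: yours is a single uniform argument with no case split, it gives strict optimality and uniqueness directly, and it sidesteps the most delicate point of the paper's proof, namely that the $2R^2$ comparison used to dismiss unbalanced optima in slices with $R>8(k-1)$ implicitly compares slices with different $k$, where the discarded $k$-dependent terms differ. The paper's two-stage argument, in exchange, exposes structure that your computation hides: for a fixed path length the cycles should be balanced exactly when $a+b<8(k-1)$, with the curious degenerate case $a+b=8(k-1)$ in which the split is immaterial (highlighted in the remark after the paper's proof), together with monotonicity in $k$ along the diagonal $a=b$. Your closing substitution of $a=b=3$, $k=n-4$ into the ensuing Corollary indeed simplifies to $\frac{2n^3+12n^2-51n+101}{6(n+1)}$, so the quantitative claim is verified as well.
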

\begin{proof}
Let $G = CB(k,a,b)$ and $m = |E(G)|$ and $n = |V(G)|$.
Since for a fixed number of vertices $2m-n$ is a constant, we can optimize the vertex Kemeny's constant. We begin by noticing that if $m = a+b+k-1$ then we can rewrite the expression in Theorem \ref{thm:VkemCB} as
\begin{align*}
      \kemeny_v(G) &=\frac{1}{6m}\cdot\left[m^3-m^2+m+k^3-2k^2+4+3(a+b)(k^2-1)-k((a+b)+3)((a+b)+1)\right.\\
      &\left.-a^2(b-2)-b^2(a-2)+4ab(2k-1) \right]
  \end{align*}
  Using this expression we see the only nonconstant terms are
\[
k^3 - 2k^2 + 3(a+b)(k^2-1) - k(a+b+3)(a+b+1) - a^2(b-2) - b^2(a-2) + 4ab(2k-1).
\]
Now suppose that $k$ is fixed. Then since $n$ is fixed, $a+b$ is also fixed. Thus the only nonconstant terms are
\begin{equation}\label{eq:edgeNonConstantTerms}
4ab(2k-1) -a^2(b-2) - b^2(a-2).
\end{equation}
Say $a+b = R$. Then we can reduce this expression to a function of a single variable by replacing $a = R-b$ and we obtain
\begin{align*}
    &4b(R-b)(2k-1) - (R-b)^2(b-2) - b^2(R-b-2)=b^2(R-8(k-1)) - bR(R-8(k-1)) + 2R^2.
\end{align*}
As a function of $b$ this expression has a critical value at $b = R/2$ (hence when $a=b$). This is a maximum if $R<8(k-1)$, a minimum if $R>8(k-1)$, and is constant with value $2R^2$ if $R = 8(k-1)$.

If $R>8(k-1)$ then Equation (\ref{eq:edgeNonConstantTerms}) will be largest when $a$ (or $b$) is as small as possible (i.e. $b=3$). In this case one shows that Equation (\ref{eq:edgeNonConstantTerms}) is less than $2R^2$.
If $R<8(k-1)$ one can show that Equation (\ref{eq:edgeNonConstantTerms}) is greater than $2R^2$.  Thus we see for fixed $k$, the cycle barbell will have largest vertex Kemeny's constant when $a=b$ and $R<8(k-1)$.


Now let $a=b$. We will optimize letting $k$ vary. The expression of interest in this case becomes
\[
k^3 - 2k^2 + 6b(k^2-1) - k(2b+3)(2b+1) - 2b^3 + 4b^2 + 4b^2(2k-1).
\]
This is seen to be strictly increasing for $0\leq k\leq n-4$.

Therefore, the cycle barbell on $n$ vertices with maximal vertex Kemeny's constant---and hence maximal edge Kemeny's constant---is when $a=b=3$ and the path is as long as can be.
\end{proof}

We remark that in the proof above, when the order of the graph is fixed and $a+b=8(k-1)$ for some fixed $k$, the expression for the edge Kemeny's constant (and thus also the vertex Kemeny's constant) is independent of the choice of $a$ and $b$.  Thus surprisingly, for that particular length of path, it does not matter how balanced the two cycles are.

\begin{theorem}\label{thm:maxNBRWBarbell}
The non-backtracking Kemeny's constant for a cycle barbell on a fixed number of vertices $n$ is maximized at $CB(2, \lceil n/2\rceil, \lfloor n/2\rfloor)$.  Moreover, for $n$ even we have 
\[\kemeny_{nb}(CB(2,n/2,n/2))=\frac{11n^2+14n+2}{4(n+1)}\]
and for $n$ odd
\[\kemeny_{nb}(CB(2,(n+1)/2,(n-1)/2))=\frac{11n^2+14n+1}{4(n+1)}.\]
\end{theorem}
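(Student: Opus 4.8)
The plan is to exploit the closed form for $\kemeny_{nb}(CB(k,a,b))$ from Theorem~\ref{thm:kemNBRWBarbell}. First I would observe that, since the number of vertices $n = a+b+k-2$ is fixed, the number of edges $m = a+b+k-1 = n+1$ is also fixed, so the denominator $2(a+b+k-1) = 2(n+1)$ in that formula is a constant. Hence maximizing $\kemeny_{nb}$ over barbells on $n$ vertices is equivalent to maximizing the numerator
\[
N = 2(n+1)^2 + 3(a+b)^2 + 2ab + 4(a+b)(k-1) - (n+1),
\]
subject to the structural constraints $a,b \geq 3$ and $k \geq 2$ (the path $P_k$ must supply a genuine bar of at least one edge joining the two cycles, which rules out the degenerate $k=1$ wedge of two cycles) together with $a+b+k = n+2$.

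Next I would eliminate $k$ using $k-1 = (n+1)-(a+b)$ and set $s = a+b$. Substituting and discarding the terms that depend only on $n$, the quantity to be maximized becomes $g(s,ab) = -s^2 + 4s(n+1) + 2ab$. This separates the optimization into two independent pushes: for fixed $s$ the expression is increasing in $ab$, and among integers with $a+b = s$ and $a,b\ge 3$ the product $ab$ is largest when $a,b$ are as balanced as possible, namely $\{\lceil s/2\rceil, \lfloor s/2\rfloor\}$; while substituting this balanced value shows that the resulting one-variable function of $s$ is strictly increasing on the feasible range $6 \le s \le n$ (since the balanced form $-s^2/2 + 4s(n+1)$ has positive increment for $s < 4(n+1)$, and $s \le n$). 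Therefore $N$ is maximized by taking $s$ as large as possible, i.e.\ $k$ as small as possible, which forces $k = 2$, $a+b = n$, and $a,b$ balanced. This yields the claimed extremizer $CB(2,\lceil n/2\rceil, \lfloor n/2\rfloor)$.

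Finally I would substitute $k=2$, $a+b=n$ into the formula, splitting into the two parity cases: for $n$ even take $a=b=n/2$ so $ab = n^2/4$, and for $n$ odd take $a=(n+1)/2$, $b=(n-1)/2$ so $ab=(n^2-1)/4$. Plugging these into $N/(2(n+1))$ and simplifying gives $\frac{11n^2+14n+2}{4(n+1)}$ and $\frac{11n^2+14n+1}{4(n+1)}$ respectively, matching the statement.

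I expect the main obstacle to be the optimization bookkeeping rather than any deep idea: one must verify carefully that pushing $s$ to its maximum and balancing $a,b$ can be carried out simultaneously (they are compatible, since both improvements are realized at $k=2$ with equal cycles), and that the balance is legitimate under $a,b\ge 3$ over the relevant range of $n$. It is worth flagging explicitly that this maximizer is the opposite extreme from the edge/vertex Kemeny maximizer $CB(n-4,3,3)$ of Theorem~\ref{thm:maxEdgeBarbell} (minimal cycles, maximal path); the $-s^2$ term together with the favorable $+2ab$ coefficient is exactly what reverses the preferred configuration, and getting that reversal right is the crux of the argument.
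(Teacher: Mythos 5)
Your proposal is correct, and its core --- exploiting the fixed denominator $2(n+1)$ in Theorem~\ref{thm:kemNBRWBarbell} and maximizing the non-constant quadratic $3(a+b)^2+2ab+4(a+b)(k-1)$ by a nested optimization --- is the same as the paper's. The difference is the order of elimination, and it buys you something. The paper first fixes $k$ (so $a+b$ is fixed) and balances $a=b$ to maximize $2ab$, then sets $a=b$, eliminates $k$, and shows the resulting function $-2a^2+8(n+1)a$ is increasing on the feasible range, forcing $k=2$; but since $a=b$ with $k=2$ is impossible for odd $n$, the paper must patch that case with a separate computation comparing $\kemeny_{nb}(CB(2,a+1,a))$ against $\kemeny_{nb}(CB(3,a,a))$. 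You instead eliminate $k$ first, writing the objective as $-s^2+4s(n+1)+2ab$ with $s=a+b$, balance $a,b$ at each fixed $s$, and show the resulting envelope is increasing on $6\le s\le n$; this treats both parities uniformly and makes the odd-$n$ conclusion $CB(2,\lceil n/2\rceil,\lfloor n/2\rfloor)$ fall out without any ad hoc comparison. You are also right to flag the constraint $k\ge 2$ explicitly: it is essential, since with $k=1$ (a wedge of two cycles) one has $s=n+1$ and the same increasing envelope would give a strictly larger value, so the theorem as stated depends on excluding that degenerate case --- the paper imposes this only implicitly through the bound $a\le n/2$. Your concluding substitutions for the two parity cases agree with the stated closed forms.
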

\begin{proof}
Let $G = CB(k,a,b)$.
For a fixed number of vertices $n$, a cycle barbell also has a fixed number of edges $m$. Thus the only non constant terms of the expression for $\kemeny_{nb}$  given by Theorem \ref{thm:kemNBRWBarbell} will be
\[
3(a+b)^2+2ab+4(a+b)(k-1).
\]
We first show that for fixed $k$, this is maximized when $a=b$. This is easily seen since if $k$ fixed and $n$ is fixed, then so is $a+b$. Thus this comes down to optimizing $2ab$ which is largest when $a=b$.

Now consider $a=b$ and constant $n$. We find the maximum value here. The expression of interest becomes
\[
3(2a)^2 + 2a^2+4(2a)(k-1) = 14a^2+8a(k-1).
\]
Notice that $2a+k-2 = n$ and so $k = n+2-2a$. Substituting this in yields
\[
-2a^2+8(n+1)a.
\]
Simple analysis shows us that this will attain it's maximum at $a=2(n+1)$, but $a\leq n/2$ and since the expression is increasing for $a< 2(n+1)$ the barbell will be maximized when $a$ is as large as possible. In the case that $n$ is even this is the graph $CB(2,n/2,n/2)$.  
For odd $n$, computation shows that $\kemeny_{nb}(CB(2,a+1,a)) > \kemeny_{nb}(CB(3,a,a))$.
\end{proof}

\begin{figure}[h]
\begin{center}
    \includegraphics[scale=0.75]{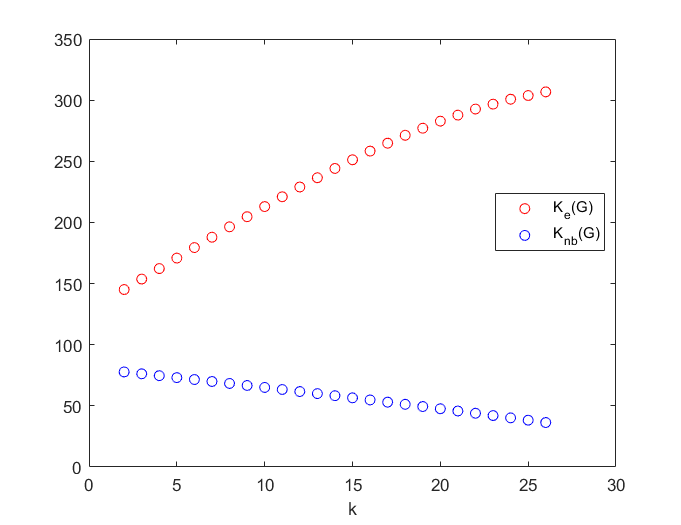}
\end{center}
\caption{A comparison of the values of $\kemeny_e(G)$ and $\kemeny_{nb}(G)$, where $G$ is a cycle barbell of order $n=30$, with $k$ varying from $2$ to $26$, and $a=b=\tfrac{1}{2}(n-k+2)$.}
\label{fig:cb_comp}
\end{figure}

We end with some discussion of open questions and avenues of research.  Theorems \ref{thm:maxEdgeBarbell} and \ref{thm:maxNBRWBarbell} exhibit interesting differences in behavior between a simple random walk and a non-backtracking random walk. For cycle barbells, the simple random walk Kemeny's constant is largest when there was a large path and small cycles, whereas in the non-backtracking random walk Kemeny's constant was largest when there was a small path with large cycles. Also note that the edge Kemeny's constant is an order of magnitude larger than the non-backtracking Kemeny's constant even when both are compared at $G = CB(2,n/2,n/2)$ (the maximizer for the non-backtracking walk, and minimizer for the simple walk); see Figure~\ref{fig:cb_comp}. In particular $\kemeny_{nb}(G) = O(n)$ and $\kemeny_e(G) = O(n^2)$. This suggests that, while long paths will tend to lead to a large Kemeny's constant for the simple walk, large cycles make more of a difference for the non-backtracking walk. It would be interesting to further investigate more generally what graph properties lead to large or small simple walk Kemeny's constant versus a large non-backtracking walk Kemeny's constant.  

Note that from Theorem \ref{thm:kemratio}, for regular graphs, the simple walk and non-backtracking walk Kemeny's constants have the same order of magnitude.  It is an interesting open question to determine for what graphs these orders of magnitude will be the same, and for what graphs they are different, and by how much they can differ.  Moreover, it is known that for the simple walk Kemeny's constant on the vertices, Kemeny's constant is at most on the order of $O(n^3)$ where $n$ is the number of vertices, and there are examples where this order of magnitude is achieved (see \cite{breen2019computing}).  In all examples from this work, the largest non-backtracking Kemeny's constant that we have seen is on the order of $O(n^2)$ (but again, the comparison based on size of the graph is a more subtle matter since the state space of the Markov chain is now the number of directed edges).  It would be of interest to determine if this is the largest possible order of magnitude.

Finally, in nearly all results from this paper, the non-backtracking Kemeny's constant is smaller than the simple edge Kemeny's constant.  The only exceptions to this have only a few vertices.  Indeed, we have done computations on all connected graphs with minimum degree at least 2 that are not cycles on up to 10 vertices.  We have found that for $n=4$ vertices there are 2 graphs with $\kemeny_{nb}(G)\geq\kemeny_e(G)$, on $n=5$ vertices there are 10 graphs with $\kemeny_{nb}(G)\geq\kemeny_e(G)$, on $n=6$ vertices there are 18 graphs with $\kemeny_{nb}(G)\geq\kemeny_e(G)$, on $n=7$ vertices there are 7 graphs with $\kemeny_{nb}(G)\geq\kemeny_e(G)$, on $n=8$ vertices there are 3 graphs with $\kemeny_{nb}(G)\geq\kemeny_e(G)$, and on $n=9$ and $n=10$ vertices, there are no graphs with $\kemeny_{nb}(G)\geq\kemeny_e(G)$. We conjecture that, for all graphs with sufficiently many vertices, the non-backtracking Kemeny's constant will be smaller. 





\bibliographystyle{plain}
\bibliography{sources}

\end{document}